\documentclass[11pt,a4paper]{article}
\usepackage{amsfonts,amssymb,amsmath,amscd,stmaryrd,latexsym,color,makeidx,theorem,psfrag}
 \setlength{\voffset}{-0.5 cm} 
 \setlength{\textwidth}{15.0cm} \setlength{\textheight}{22.5cm}
 \addtolength{\hoffset}{-1.2cm}
\usepackage{hyperref}

\author{Azahara DelaTorre \\ {\small  Albert-Ludwigs-Universit\"at Freiburg} \and Ali Hyder \\ {\small UBC Vancouver}\and  Luca Martinazzi \\  {\small Universit\'a di Padova} \and Yannick Sire \\ \small{John Hopkins University} }

\title{The non-local mean-field equation on an interval}

\newtheorem{trm}{Theorem}
\newtheorem{prop}[trm]{Proposition}
\newtheorem{cor}[trm]{Corollary}
\newtheorem{lemma}[trm]{Lemma}

\newcommand{\vp}{\varphi}

\newcommand{\R}{\mathbb{R}}

\newcommand{\de}{\partial}
\newcommand{\ve}{\varepsilon}

\newcommand{\M}[1]{\mathcal{#1}}

\newcommand{\bra}[1]{\left({#1}\right)}
\newcommand{\D}{\Delta}
\newenvironment{proof}{\noindent\emph{Proof.}}{\hfill$\square$\medskip}

\DeclareMathOperator{\loc}{loc}

\begin{document}
\maketitle

\begin{abstract}
We consider the fractional mean-field equation on the interval $I=(-1,1)$
$$(-\Delta)^\frac{1}{2} u=\rho\frac{e^{u}}{\int_{I}e^{u}dx},$$
subject to Dirichlet boundary conditions,
and prove that existence holds if and only if $\rho <2\pi$. This requires the study of blowing-up sequences of solutions. We provide a series of tools in particular which can be used (and extended) to higher-order mean field equations of non-local type.
\end{abstract}

\section{Introduction}
Given a number $\rho>0$, we consider the non-local mean-field equation
\begin{equation}\label{eq0}
(-\Delta)^\frac{1}{2} u=\rho\frac{e^{u}}{\int_{I}e^{u}dx},\quad I=(-1,1)
\end{equation}
subject to the Dirichlet boundary condition
\begin{equation}\label{dir}
u\equiv 0\quad \textrm{in } \R\setminus I. 
\end{equation}
There are different ways to define the fractional Laplacian $(-\Delta)^\frac12$ and therefore make sense of Problem \eqref{eq0}-\eqref{dir}.
Consider the space of functions $L_{\frac12}(\mathbb{R})$ defined by
\begin{equation}
L_{\frac12}(\mathbb{R})=\left\{ u\in L^{1}_{\loc}(\mathbb{R}): \int_{\mathbb{R}}\frac{|u(x)|}{1+|x|^2}dx <\infty \right\}.
\end{equation}
For a function $u\in L_{\frac12}(\mathbb{R})$  one can define $(-\Delta)^{\frac12}u$ as a tempered distribution as follows:
\begin{equation}\label{deffrlap}
\langle (-\Delta)^{\frac12}u,\varphi \rangle := \int_{\R{}} u(-\Delta)^{\frac12}\varphi dx, \quad \varphi\in \M{S},
\end{equation}
where $\M{S}$ denotes the Schwartz space of rapidly decreasing smooth functions and for $\varphi\in\M{S}$ we set
$$(-\Delta)^\frac12\varphi:=\mathcal{F}^{-1}(|\cdot |\hat{\varphi}).$$ Here the Fourier transform is defined by
$$ \hat \varphi(\xi)\equiv \mathcal{F} \varphi(\xi):=\frac{1}{\sqrt{2\pi}}\int_{\R{}}e^{-ix\xi}\varphi(x)\, dx.$$
Notice that the convergence of the integral in \eqref{deffrlap} follows from the fact that for $\varphi\in \M{S}$ one has
$$|(-\Delta)^\frac12\varphi(x)| \le C(1+|x|^{2})^{-1}.$$
If $u\in C^{0,\alpha}(I)$ we can also define
$$(-\Delta)^\frac12 u(x):=\frac{1}{\pi}P.V. \int_{\R} \frac{u(x)-u(y)}{(x-y)^2}dy,\quad x\in I.$$

These definitions are equivalent for the functions that we shall consider, namely function in $C^{0,\frac12}(\R)$ vanishing outside $I$. In fact, every solution to \eqref{eq0}-\eqref{dir} lies in $C^{0,\frac12}(\R)$, see e.g. Corollary 1.6 of \cite{RS}, and it is smooth inside $I$ by a standard bootstrap argument. Therefore there is no loss of generality in working only with functions in $C^{0,\frac12}(\R)\cap C^\infty(I)$.

\medskip

In this paper we shall develop some tools to treat existence and non-existence for problem \eqref{eq0}-\eqref{dir}. In spite of the possibility of working with the extention of $u$ to the upper half-plane, i.e. of localizing the problem as often done, we will only use \emph{purely non-local} methods, that can be best extended to treat also non-local \emph{higher-dimensional} cases. 

\medskip

In dimension $2$ the analog of Problem \eqref{eq0}-\eqref{dir} is
\begin{equation}\label{eq2d}
-\Delta u=\rho\frac{e^{u}}{\int_{\Omega}e^{u}dx}\text{ in }\Omega,\quad u=0\text{ on }\de\Omega,\quad \Omega\Subset\R^2
\end{equation}
where $\Omega$ is smoothly bounded. As proven in \cite{CLMP} using variational arguments (minimization of a suitable functional) and in \cite{Kie} via probabilistic methods, Problem \eqref{eq2d} has a solution for every $\rho\in (0,8\pi)$.
The threshold $8\pi$ is sharp since when $\Omega$ is star-shaped \eqref{eq2d} has no solution for every $\rho\ge 8\pi$ by the Pohozaev identity.

If, on the other hand, $\Omega$ is not simply connected or it is replaced by a closed Riemann surface $(\Sigma,g)$ of genus at least $1$, in which case \eqref{eq2d} is replaced by
\begin{equation}\label{eq2d'}
-\Delta_g u=\rho\left(\frac{e^{u}}{\int_{\Sigma }e^{u}dv_g}-1\right)\text{ in }\Sigma,
\end{equation}
Ding-Jost-Li-Wang \cite{DJLW} proved that \eqref{eq2d'} admits a solution for every $\rho\in (8\pi,16\pi)$. Struwe and Tarantello \cite{ST} independently proved a similar result on the flat torus and for $\rho\in (8\pi, 4\pi^2)$. For a general closed surface (including a sphere) Malchiodi \cite{Mal} proved existence for every $\rho\not\in 8\pi \mathbb{N}$, using the barycenter technique, see also \cite{Dja}.

\medskip

An important tool in proving such existence results is an a priori study of the blowing-up behavior of sequences $(u_k)$ of solutions to \eqref{eq2d} or \eqref{eq2d'} with $\rho=\rho_k$. This was performed by Brezis-Merle \cite{BM} and Li-Shafrir \cite{LS} for the Liouville equation, which arises from \eqref{eq2d} by adding a constant. Theses seminal works have several extensions to even dimension $4$ and higher, see e.g. \cite{wei},\cite{RW} and \cite{MP}, using higher-dimensional compactness results, see e.g. \cite{mar2}. In order to study the $1$-dimensional case we will need the following analogue non-local blow-up result.

\begin{trm}\label{trm1} Let $u_k$ be a sequence of solutions to \eqref{eq0}, \eqref{dir} with $\rho=\rho_k>0$.
Then up to a subsequence one of the following is true:
\begin{itemize}
\item[(i)]  $(u_k)$ is bounded in $C^{0,\tfrac12}(\R)\cap C^\ell_{\loc}(I)$ for every $\ell\in\mathbb{N}$. 
\item[(ii)] $\lim_{k\to\infty} u_k(0)=\infty$
\begin{equation}\label{quant_1}
\rho_k\uparrow2\pi\quad \text{as }k\to \infty.
\end{equation}
Moreover,  for $0<\sigma<\frac12$
\begin{equation}\label{green_1}
u_k\to 2\pi G_{0}\quad \text{in }C^{0,\sigma}_{\loc}(\R \backslash \{0\}),
\end{equation}
where $G_0$ is the Green function of $(-\Delta)^\frac12$ on $I$ with Dirichlet boundary condition.
\end{itemize}
\end{trm}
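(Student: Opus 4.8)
The plan is to follow the classical Brezis–Merle/Li–Shafrir strategy, adapted to the non-local setting using purely one-dimensional tools. First I would normalize: set $f_k := \rho_k \frac{e^{u_k}}{\int_I e^{u_k}\,dx}$, so that $\|f_k\|_{L^1(I)} = \rho_k$. Since the $\rho_k$ are solutions of a mean-field problem, I would first argue (or assume it can be extracted as a subsequence) that $\rho_k \to \rho_\infty \in [0,\infty]$. The key dichotomy is whether $\rho_\infty < 2\pi$ or not. To get concentration-compactness, I would invoke a non-local analogue of the Brezis–Merle estimate: if $(-\Delta)^{1/2} v = f$ in $I$ with $v = 0$ outside $I$, then for $\|f\|_{L^1} < 2\pi$ one has $e^{\lambda |v|} \in L^1(I)$ for a suitable $\lambda$ depending on the gap to $2\pi$ — this should follow from the explicit Green's representation $v(x) = \int_I G_0(x,y) f(y)\,dy$ together with the logarithmic singularity $G_0(x,y) \sim -\tfrac{1}{\pi}\log|x-y|$ of the half-Laplacian Green's function on the interval and Jensen's inequality. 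Once this is available, one obtains that either $(u_k)$ is bounded in $L^\infty_{\loc}$ — leading to case (i) by elliptic regularity (bootstrap for $(-\Delta)^{1/2}$, e.g. via \cite{RS}) and $C^{0,1/2}$ bounds up to the boundary — or the measures $f_k\,dx$ concentrate.

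In the concentration case, I would show the blow-up set $S = \{x : \exists x_k \to x,\ u_k(x_k) \to \infty\}$ is finite and, using the fact that the energy $\rho_k$ is bounded below $2\pi$ after passing to the limit would contradict concentration, that in fact a single point carries mass exactly $2\pi$. The mechanism: near a blow-up point, rescaling $u_k$ by its maximum and using the classification of entire solutions of $(-\Delta)^{1/2} U = e^U$ on $\mathbb{R}$ with finite mass (the non-local Liouville equation — here one uses the known classification, with total mass $2\pi$, going back to the work related to \cite{RS} and the Liouville-type results it references) forces the local mass to be a multiple of $2\pi$; boundedness of $\rho_k$ and the Dirichlet condition forbid more than one bubble and forbid blow-up at the boundary (here one would use a Pohozaev-type or moving-plane argument, or more simply the positivity and boundary behavior of $G_0$ to exclude boundary concentration). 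Hence exactly one interior bubble, $S = \{p\}$ for some $p \in I$, $\rho_k \to 2\pi$, and $f_k \,dx \rightharpoonup 2\pi\,\delta_p$. A symmetrization or uniqueness argument (the problem on $I=(-1,1)$ with its symmetry, plus the structure of $G_0$) pins $p = 0$, giving $u_k(0) \to \infty$ and \eqref{quant_1}.

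Finally, for \eqref{green_1}: away from $p=0$, write $u_k(x) = \int_I G_0(x,y) f_k(y)\,dy$; since $f_k \rightharpoonup 2\pi\delta_0$ weakly-$*$ and $G_0(\cdot,y)$ is, for $x$ bounded away from $0$, uniformly bounded and equicontinuous in $y$ near $0$, one passes to the limit to get $u_k(x) \to 2\pi G_0(x,0) =: 2\pi G_0(x)$ pointwise, then upgrades to $C^{0,\sigma}_{\loc}(\R\setminus\{0\})$ for $\sigma < 1/2$ using the uniform $L^1$ bound on $f_k$ together with interior Schauder/potential estimates for $(-\Delta)^{1/2}$ on regions not containing $0$ (the right-hand side $f_k$ is, on such regions, bounded in every $L^p$ once one knows no other blow-up occurs, by a local $L^\infty$ bound coming from the sub-critical-mass Brezis–Merle estimate applied locally).

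I expect the main obstacle to be the precise non-local Brezis–Merle-type estimate and the rescaling/classification step near the blow-up point: the half-Laplacian is non-local, so truncating $f_k$ or localizing $u_k$ introduces tails that must be controlled (this is exactly where the ``purely non-local methods'' advertised in the introduction are needed, in place of the harmonic-extension trick), and the quantization ``local mass $\in 2\pi\N$'' requires a clean classification of finite-mass entire solutions of the non-local Liouville equation together with a no-neck-energy (no residual mass) lemma. Excluding boundary blow-up and showing the single bubble sits at $0$ are also delicate and will likely use the explicit structure and symmetry of $G_0$ on the interval.
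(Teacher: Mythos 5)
Your blueprint (Brezis--Merle alternative, finite blow-up set, rescaling plus classification of the entire non-local Liouville solutions, then passing to the limit in the Green representation to get \eqref{green_1}) is reasonable, and its last step and the bubbling step do match what the paper does (Lemmas \ref{conveta} and \ref{convGreen}). But there is a genuine gap at the heart of the argument: you never produce the a priori Pohozaev-type bound $\rho<2\pi$ valid for \emph{every} solution (Proposition \ref{poho} in the paper). This inequality is the linchpin. It is what makes the conclusion $\rho_k\uparrow 2\pi$ \emph{from below} in \eqref{quant_1} possible at all -- no concentration-compactness argument can give the direction of approach -- and it is what rules out multiple bubbles and residual mass for free: one bubble already carries mass $2\pi$ in the limit, while the total mass is a priori below $2\pi$. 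In your plan, by contrast, you ``assume'' $\rho_k\to\rho_\infty\in[0,\infty]$ and later invoke ``boundedness of $\rho_k$'' without ever establishing it (the hypothesis is only $\rho_k>0$), and to exclude several bubbles and neck energy you would need a full non-local Li--Shafrir-type quantization theorem with boundary, which is not available off the shelf and whose proof you yourself flag as ``the main obstacle''. In other words, the hard part of the theorem is exactly what your proposal leaves open; the paper avoids all of it by first proving the Pohozaev inequality via the Green representation.

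The second missing ingredient is the symmetry. You propose to pin the blow-up point at $0$ and to exclude boundary concentration by ``a symmetrization or uniqueness argument'': symmetrization does not apply (the $u_k$ are given solutions, not minimizers), and uniqueness is not known (the paper only conjectures it). What is actually used is the non-local moving-plane result (Lemma \ref{MP}, proved in Theorem \ref{MP_Ap} via the Hopf-type Lemma \ref{HL}): every solution is even and non-increasing in $|x|$. This makes the maximum sit at $0$, so blow-up can only occur there, boundary concentration never arises, and the dichotomy of the theorem reduces to whether $\hat u_k(0)=\max \hat u_k$ stays bounded -- in which case the Green representation immediately gives the $C^{0,1/2}(\R)\cap C^\ell_{\rm loc}(I)$ bounds of case (i), with no need for a non-local Brezis--Merle estimate. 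So while your outline is correct in spirit for the bubbling and limit steps, it is incomplete without (a) the Pohozaev inequality $\rho<2\pi$ and (b) the moving-plane monotonicity; with them, most of the heavy machinery you invoke becomes unnecessary, and without them, the steps you defer (quantization, no-neck energy, location of the bubble, exclusion of boundary blow-up, and the monotone approach $\rho_k\uparrow2\pi$) do not go through as sketched.
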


Let us notice that if we replace the right-hand side of \eqref{eq0} with the nonlinearity $e^{u^2}$, nonlocal compactness problems have been studied in \cite{IMM} and \cite{MMS}, but the techniques used there are different, for instance because of the lack of a Pohozaev-type identity. In fact a result analog to \eqref{green_1} is still unknown in the fractional case, although in dimension $2$ it was recently proven by Druet-Thizy \cite{DT}, see also \cite{MM}.

\medskip

Using Theorem \ref{trm1} and Schauder's fixed-point theorem we are able to prove the following result about existence and non-existence.

\begin{trm}\label{trm2}
There exists a non-trivial non-negative  solution $u=u_\rho$ to \eqref{eq0}\eqref{dir} if and only if $\rho\in (0,2\pi)$. Moreover, $$u_\rho(0)\to\infty\quad\text{as }\rho\uparrow 2\pi.$$
\end{trm}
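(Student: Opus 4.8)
The plan is to split the statement into three parts: (a) existence of a nontrivial nonnegative solution for every $\rho\in(0,2\pi)$; (b) nonexistence for $\rho\geq 2\pi$; (c) the blow-up asymptotics $u_\rho(0)\to\infty$ as $\rho\uparrow 2\pi$.

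\textbf{Existence for $\rho\in(0,2\pi)$.} I would fix such a $\rho$ and set up a fixed-point scheme. Define the map $T:C^{0,\frac12}(\R)\to C^{0,\frac12}(\R)$ sending $v$ (vanishing outside $I$) to the unique solution $u$ of the linear problem $(-\Delta)^{\frac12}u=\rho\, e^{v}/\int_I e^v\,dx$ in $I$, $u\equiv 0$ in $\R\setminus I$; this $u$ is obtained by convolving the right-hand side against the Green function $G$ of $(-\Delta)^{\frac12}$ on $I$. By the maximum principle $T$ maps nonnegative functions to nonnegative functions, and the regularity theory quoted in the introduction (Corollary 1.6 of \cite{RS}) shows $T$ is compact on the cone of nonnegative functions in $C^{0,\frac12}(\R)$. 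To apply Schauder's fixed-point theorem I need an a priori bound: any solution $u_\rho$ of \eqref{eq0}--\eqref{dir} with this fixed $\rho<2\pi$ satisfies $\|u_\rho\|_{C^{0,\frac12}(\R)}\leq C(\rho)$. This is exactly where Theorem \ref{trm1} enters: if a sequence of solutions at fixed $\rho<2\pi$ were unbounded, alternative (ii) of Theorem \ref{trm1} would force $\rho_k\uparrow 2\pi$, hence $\rho=2\pi$, a contradiction. Therefore alternative (i) holds and the solution set is bounded, giving the required a priori estimate and a fixed point $u_\rho\geq 0$. Nontriviality is immediate since $(-\Delta)^{\frac12}u_\rho>0$ in $I$.

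\textbf{Nonexistence for $\rho\geq 2\pi$.} Here I expect to use a Pohozaev-type identity, the fractional analogue of the one used in dimension $2$ for star-shaped domains; the interval $I=(-1,1)$ is star-shaped with respect to the origin, so the argument should go through. Multiplying \eqref{eq0} by $x\,u'(x)$ and integrating (using the nonlocal Pohozaev identity for $(-\Delta)^{\frac12}$, which produces a boundary term controlled by the Dirichlet data $u\equiv 0$ outside $I$ and by $\lim_{x\to\pm1}u(x)/\sqrt{1\mp x}$), one obtains an identity whose left-hand side is a nonnegative boundary contribution and whose right-hand side equals $\rho$ times a quantity bounded above by $2\pi$; combining this with $\rho\geq 2\pi$ forces the boundary term to vanish, whence $u\equiv 0$, contradicting $\int_I e^u\,dx>0$ unless no solution exists. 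I would also double-check the endpoint $\rho=2\pi$ separately, since the inequality is not strict there, possibly arguing by a limiting/uniqueness argument combined with Theorem \ref{trm1}: a solution at $\rho=2\pi$ cannot be a blow-up limit, and the strict-inequality Pohozaev argument should still rule it out, or else one invokes that the family $\{u_\rho\}_{\rho<2\pi}$ cannot converge to a solution at $\rho=2\pi$ because of part (c).

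\textbf{Blow-up of $u_\rho(0)$.} Suppose not; then along some sequence $\rho_j\uparrow 2\pi$ the values $u_{\rho_j}(0)$ stay bounded. Since $u_{\rho_j}(0)=\max u_{\rho_j}$ (the maximum is attained at the center by symmetry and the structure of the Green function), the sequence is uniformly bounded, so by elliptic regularity it is precompact in $C^{0,\frac12}(\R)\cap C^\ell_{\loc}(I)$ and converges to a solution $u_*$ at $\rho=2\pi$. But part (b) says no such solution exists — contradiction. Hence $u_\rho(0)\to\infty$. (Alternatively this is a direct consequence of Theorem \ref{trm1}(ii) once one knows the solutions must blow up as $\rho\uparrow2\pi$, which itself follows from nonexistence at $2\pi$ plus the a priori bound for each fixed $\rho<2\pi$.)

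\textbf{Main obstacle.} The delicate point is the nonexistence direction for $\rho\geq 2\pi$: establishing the correct fractional Pohozaev identity on an interval with the right boundary term — in particular identifying the singular boundary behavior $u(x)\sim c\sqrt{\dist(x,\partial I)}$ and showing the boundary contribution has a definite sign — and then extracting the sharp constant $2\pi$ from it. The existence direction is comparatively routine modulo Theorem \ref{trm1}, and the blow-up statement is then essentially a compactness-plus-nonexistence argument.
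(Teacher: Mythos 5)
Your existence and blow-up arguments follow essentially the paper's route (fixed point plus the a priori bound from Theorem \ref{trm1}, then compactness-plus-nonexistence for $u_\rho(0)\to\infty$), but the part you yourself flag as the ``main obstacle'' -- nonexistence for $\rho\ge 2\pi$ -- is precisely where your proposal has a genuine gap. You propose to derive a fractional Pohozaev identity in the Ros-Oton--Serra style, with a boundary term involving $u(x)/\sqrt{\dist(x,\partial I)}$, and you concede that the endpoint $\rho=2\pi$ may not be covered because the resulting inequality might not be strict; your suggested fallback (ruling out a solution at $\rho=2\pi$ by a limiting argument ``because of part (c)'') is circular, since part (c) is itself deduced from nonexistence at $\rho=2\pi$. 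The paper closes this gap by a different and cleaner device (Proposition \ref{poho}): any solution is written through the Green representation $\hat u(x)=\int_I G_x(y)e^{\hat u(y)}dy+c$, the logarithmic kernel is regularized by a cutoff $\psi_\ve$, and one multiplies by $x e^{\hat u(x)}$ and integrates. The kernel part produces $-\rho^2/(2\pi)$ in the limit, the contribution of the regular part $H(x,y)$ is \emph{strictly negative} thanks to the evenness/monotonicity from the moving-plane Lemma \ref{MP}, and integration by parts of the left-hand side gives $2e^{\hat u(1)}-\rho$. This yields $2e^{\hat u(1)}-\rho\le -\rho^2/(2\pi)$, hence the strict bound $\rho<2\pi$ for every solution, with no boundary-trace term and no separate endpoint analysis. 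Without this (or an equally complete substitute), your nonexistence step, and hence also your proof of the blow-up statement, is not established.

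A smaller but real point: as written, your existence argument invokes Schauder's fixed-point theorem with only an a priori bound on the \emph{fixed points} of $T_\rho$, which by itself does not produce an invariant bounded closed convex set. The paper instead bounds all solutions of the homotopy family $u=tT_\rho(u)$, $t\in(0,1]$ (a Leray--Schauder/Schaefer-type argument): such a $u$ solves \eqref{eq0}--\eqref{dir} with parameter $t\rho\le\rho<2\pi$, so Theorem \ref{trm1} (case (ii) would force the parameters to tend to $2\pi$) gives a uniform bound for the whole family, not just for genuine solutions at the fixed $\rho$. Your reasoning extends verbatim to this family, so the repair is easy, but it should be stated; also note the compactness of $T_\rho$ is proved in the paper directly on $C^0([-1,1])$ from the kernel estimates, rather than via the regularity theory of \cite{RS}.
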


Although our method is topological, it is plausible that a variational argument in the spirit of \cite{CLMP} can also be employed.

The non-existence for $\rho\ge 2\pi$ follows at once from a Pohozaev-type inequality (see Proposition \ref{poho}), consistently with the non-existence in dimension $2$ when the domain is star-shaper. Notice that the critical threshold $2\pi$ in Theorem \ref{trm2} corresponds to the value $8\pi$ for Problems \eqref{eq2d} and \eqref{eq2d'}.

The last statement of Theorem \ref{trm2} is about the existence of blowing-up sequences of solutions, namely it shows that the situation presented in Case \emph{(ii)} of Theorem \ref{trm1} actually occurs. The proof will follow by contradiction, together with the non-existence result of $\rho=2\pi$. In dimension $2$ and higher, several such results (sometimes very subtle) are obtained by the Lyapunuv-Schmidt reduction. For instance, when $\Omega$ is simply connected, Weston \cite{Wes} proved existence of solutions to \eqref{eq2d} blowing-up on a critical point of the Robin function of $\Omega$, and \cite{DJLW} extended this result to the non-simply connected case. Multi-peak solutions have also been constructed, starting with the seminal work of Baraket-Pacard\cite{BP}, see e.g. the work \cite{Esp} and its references.

We also mention that in dimension $2$, when $\Omega$ is simply connected and $\rho\in (0,8\pi)$, Suzuki \cite{Suz} proved uniqueness of solutions for Problems \eqref{eq2d}. It is reasonable to expect that the same holds in $1$ dimension for \eqref{eq0}-\eqref{dir}.


\paragraph{Acknowledgements} We are grateful to Francesca Da Lio for reading the manuscript and for very useful remarks and to Gabriele Mancini for interesting conversations.

The first, second and third author have been supported by the Swiss National Science Foundation projects n. PP00P2-144669, PP00P2-170588/1 and P2BSP2-172064. The first author is also partially supported by Spanish government grants MTM2014-52402-C3-1-P and MTM2017-85757-P. The fourth author is supported by a Simons fellowship.

\section{Preliminaries}

We shall use the Green function defined by the formula
\begin{align}\label{Green}
G_x(y)&:=\tfrac{1}{\pi}\left(\log(\sqrt{(1-|x|^2)(1-|y|^2)}+1-xy)-\log|x-y|\right)  \notag\\
&=:-\tfrac{1}{\pi}\log|x-y|+H(x,y), \quad x,y\in I
\end{align}
and $G_x(y)=0$ for $x\in I$, $y\in \R\setminus I$.
It is well-known (see e.g. \cite{Green1}) that
\begin{equation}\label{DeltaGreen}
(-\Delta)^\frac12 G_x = \delta_x\quad \text{for }x\in I.
\end{equation}

As usual, using the Green function we can write solutions to \eqref{eq0}-\eqref{dir} in terms of a Green representation formula.

\begin{lemma}\label{green} A function $u\in C^\frac{1}{2}(\R)\cap C^\infty(I)$ solves \eqref{eq0}-\eqref{dir} if and only if
$$u(x)=\rho \int_I G_x(y) \frac{e^{u(y)}}{\int_{I}e^{ u(\xi)}d\xi} dy.$$
\end{lemma}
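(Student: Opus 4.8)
The plan is to derive both implications from the single identity \eqref{DeltaGreen} together with the following uniqueness principle for $(-\Delta)^\frac12$ on $I$: if $w\in C^{0,\frac12}(\R)$ vanishes in $\R\setminus I$ and satisfies $(-\Delta)^\frac12 w=0$ in $I$, then $w\equiv 0$. To see this, note that $w$, being continuous and supported in the compact set $\overline I$, attains $\max_\R w$; if this maximum were positive it would be attained at some interior point $x_0\in I$ (since $w=0$ on $\partial I$), where $w$ is smooth and the pointwise definition of $(-\Delta)^\frac12$ applies, so that
$$0=(-\Delta)^\frac12 w(x_0)=\frac{1}{\pi}P.V.\!\int_\R\frac{w(x_0)-w(y)}{(x_0-y)^2}\,dy\ge 0,$$
with equality only if $w\equiv w(x_0)$, which contradicts $w=0$ outside $I$ unless $w\equiv 0$. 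Hence $w\le 0$ on $\R$, and applying the same argument to $-w$ gives $w\equiv 0$.

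For the ``only if'' part, assume $u$ solves \eqref{eq0}--\eqref{dir} and set
$$f:=\rho\,\frac{e^{u}}{\int_I e^{u(\xi)}\,d\xi},\qquad v(x):=\int_I G_x(y)\,f(y)\,dy.$$
Since $u\in C^{0,\frac12}(\R)$ vanishes outside $I$ it is bounded, so $f$ is bounded, positive and smooth in $I$, and $v$ is well defined because $y\mapsto G_x(y)\in L^1(I)$ with norm locally bounded in $x$. First one checks $v\equiv 0$ in $\R\setminus I$: for $x\in\R\setminus I$ and $y\in I$ one has $G_x(y)=G_y(x)=0$ by the symmetry of $G$ in \eqref{Green} and the boundary condition defining it. Next, for $\varphi\in\M{S}$, Fubini's theorem (justified by $|(-\Delta)^\frac12\varphi(x)|\le C(1+|x|^2)^{-1}$ and the integrability of $G$) gives
$$\langle(-\Delta)^\frac12 v,\varphi\rangle=\int_\R v\,(-\Delta)^\frac12\varphi\,dx=\int_I f(y)\Big(\int_\R G_y(x)\,(-\Delta)^\frac12\varphi(x)\,dx\Big)\,dy=\int_\R f\,\varphi\,dx,$$
where the inner integral equals $\varphi(y)$ by \eqref{DeltaGreen} and the definition \eqref{deffrlap}. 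Thus $(-\Delta)^\frac12 v=f$ in $I$; moreover $v\in C^{0,\frac12}(\R)$ by the boundary regularity for the Dirichlet problem of $(-\Delta)^\frac12$ on $I$ with bounded right-hand side (as in Corollary 1.6 of \cite{RS}) and $v\in C^\infty(I)$ by interior regularity. Then $w:=u-v\in C^{0,\frac12}(\R)$ vanishes outside $I$ and solves $(-\Delta)^\frac12 w=0$ in $I$, so $w\equiv 0$ by the principle above; that is, $u=v$, the asserted formula.

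For the ``if'' part the argument runs backwards: if $u(x)=\rho\int_I G_x(y)\,\frac{e^{u(y)}}{\int_I e^{u(\xi)}d\xi}\,dy$, then $u=0$ in $\R\setminus I$ by the symmetry and boundary vanishing of $G$, while testing against $\varphi\in\M{S}$ and using \eqref{DeltaGreen} exactly as above yields $\langle(-\Delta)^\frac12 u,\varphi\rangle=\int_\R \rho\,\frac{e^{u}}{\int_I e^{u}}\,\varphi\,dx$, i.e. $u$ solves \eqref{eq0} (the smoothness inside $I$ being part of the standing assumptions, and in any case a consequence of interior regularity). The one genuinely delicate point is the interchange of $(-\Delta)^\frac12$ with the integral defining $v$ — the Fubini step combined with the use of \eqref{DeltaGreen} under the integral sign; once this is set up carefully in the distributional pairing, the remainder is either the elementary symmetry of $G$ or standard maximum-principle and regularity facts for $(-\Delta)^\frac12$ on the bounded set $I$.
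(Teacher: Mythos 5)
Your overall strategy (represent the right-hand side as a Green potential, check it vanishes outside $I$, verify the equation inside $I$ via \eqref{DeltaGreen} and Fubini, and conclude by a maximum-principle uniqueness argument for the homogeneous Dirichlet problem) is sound and self-contained; note that the paper itself does not argue this way but simply refers to \cite[Proposition 7]{MarAMT}, so your write-up is more detailed than the published proof, and the uniqueness principle and the symmetry/exterior-vanishing observations are exactly the standard ingredients.

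There is, however, one step that is wrong as literally written: you test against arbitrary $\varphi\in\mathcal{S}$ and claim that the inner integral $\int_\R G_y(x)(-\Delta)^{\frac12}\varphi(x)\,dx$ equals $\varphi(y)$, i.e.\ that $(-\Delta)^{\frac12}G_y=\delta_y$ as a tempered distribution on all of $\R$. This is false: $G_y$ is the \emph{Dirichlet} Green function, so \eqref{DeltaGreen} holds only when tested against functions supported in $I$; for $z\in\R\setminus\overline I$ one has pointwise $(-\Delta)^{\frac12}G_y(z)=-\tfrac1\pi\int_I G_y(t)(z-t)^{-2}\,dt<0$, so the distribution $(-\Delta)^{\frac12}G_y$ on $\R$ is $\delta_y$ plus a nontrivial negative part outside $\overline I$. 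Consequently your displayed identity $\langle(-\Delta)^{\frac12}v,\varphi\rangle=\int_\R f\varphi\,dx$ cannot hold for every $\varphi\in\mathcal{S}$ (indeed $(-\Delta)^{\frac12}v<0$ outside $\overline I$ while $f>0$ there), and the same overreach occurs in your ``if'' direction. The repair is immediate and does not affect the rest of the argument: restrict the test functions to $\varphi\in C^\infty_c(I)$ (for which the Fubini computation and \eqref{DeltaGreen} are legitimate), conclude $(-\Delta)^{\frac12}v=f$ in $\mathcal{D}'(I)$, hence pointwise in $I$ by interior regularity — which is precisely the statement you actually use, together with $v\equiv0$ outside $I$ and the boundary regularity from \cite{RS}, before applying your uniqueness principle to $w=u-v$. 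With that correction the proof is complete.
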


\begin{proof} This standard proof can be found for instance in the proof of  \cite[Proposition 7]{MarAMT} (Identity (15) in particular).
\end{proof}

\begin{cor}\label{upos} If $u$ solves \eqref{eq0}-\eqref{dir}, then $u> 0$ in $I$.
\end{cor}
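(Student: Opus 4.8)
The plan is to combine the Green representation formula of Lemma~\ref{green} with the strict positivity of the Green function $G_x$ on the open interval. Since $\rho>0$ and the probability density $e^{u(y)}/\int_I e^{u(\xi)}\,d\xi$ is strictly positive for $y\in I$, it suffices to show that $G_x(y)>0$ for every $x,y\in I$; integrating over $I$ then yields $u(x)>0$ at once.

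To check $G_x(y)>0$ for $x,y\in I$, unravel the definition \eqref{Green}: the claim is equivalent to
$$\sqrt{(1-x^2)(1-y^2)}+1-xy>|x-y|.$$
For $x,y\in(-1,1)$ we have $1-xy>0$ because $|xy|<1$. The elementary identity
$$(1-xy)^2-(x-y)^2=1-x^2-y^2+x^2y^2=(1-x^2)(1-y^2)>0$$
then gives $1-xy>|x-y|$, and adding the strictly positive term $\sqrt{(1-x^2)(1-y^2)}$ only strengthens the inequality. Hence $G_x(y)>0$ for all $x,y\in I$.

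Combining the two observations, for every $x\in I$,
$$u(x)=\rho\int_I G_x(y)\,\frac{e^{u(y)}}{\int_I e^{u(\xi)}\,d\xi}\,dy>0,$$
which is the assertion. There is essentially no obstacle here: the only point requiring (minimal) care is the positivity of $G_x$ on the open interval, handled by the algebraic identity above, and the degeneracy $\sqrt{(1-x^2)(1-y^2)}=0$ at $x=\pm1$ or $y=\pm1$ is precisely where $G_x$ stops being positive, in agreement with the Dirichlet condition \eqref{dir}.
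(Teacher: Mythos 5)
Your argument is correct and is precisely the one the paper intends: Corollary \ref{upos} is stated as an immediate consequence of the representation formula of Lemma \ref{green} together with the strict positivity of $G_x(y)$ for $x,y\in I$, which you verify by the elementary identity $(1-xy)^2-(x-y)^2=(1-x^2)(1-y^2)>0$. No discrepancy with the paper's (implicit) proof.
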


In the following lemma we apply a non-local version of the famous moving-plane technique.

\begin{lemma}\label{MP} Let $u\in C^\frac{1}{2}(\R)\cap C^\infty(I)$ solve \eqref{eq0}-\eqref{dir}. Then $u$ is even and decreasing, in the sense that $u(x)=u(-x)$ and $u(x)\ge u(y)$ for $0\le x\le y$.
\end{lemma}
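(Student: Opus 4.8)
The plan is to run the nonlocal moving-plane method in the one-dimensional setting, exploiting the symmetry-preserving structure of the fractional Laplacian together with the Green representation formula from Lemma \ref{green}. Since $u$ vanishes identically outside $I=(-1,1)$ and is positive inside (Corollary \ref{upos}), the ``tails'' that usually complicate the nonlocal moving-plane argument are trivial here, which should streamline matters considerably. For $\lambda\in(0,1)$ I would introduce the reflection $x\mapsto x_\lambda:=2\lambda-x$ about the hyperplane $\{x=\lambda\}$, set $\Sigma_\lambda:=(\lambda,1)$, and define the antisymmetric difference $w_\lambda(x):=u(x_\lambda)-u(x)$ on $\Sigma_\lambda$. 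The goal is to show $w_\lambda\ge 0$ in $\Sigma_\lambda$ for every $\lambda\in(0,1)$; letting $\lambda\downarrow 0$ then yields $u(-x)\ge u(x)$ for $x\ge 0$, and the reverse inequality follows by reflecting, giving evenness, while monotonicity on $[0,1)$ is read off from $w_\lambda\ge0$ as $\lambda$ varies.

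The key computation is the nonlocal differential inequality satisfied by $w_\lambda$. Writing the equation via the singular-integral definition $(-\Delta)^{\frac12}u(x)=\frac1\pi P.V.\int_\R \frac{u(x)-u(y)}{(x-y)^2}dy$, one computes for $x\in\Sigma_\lambda$ that $(-\Delta)^{\frac12}w_\lambda(x)$ equals the right-hand side difference plus a nonnegative nonlocal term coming from the asymmetry of the kernel under reflection (the standard identity: for antisymmetric $w$, $(-\Delta)^{\frac12}w(x) = \frac1\pi P.V.\int_{\Sigma_\lambda}\big(\frac{1}{(x-y)^2}-\frac{1}{(x-y_\lambda)^2}\big)(w(x)-w(y))\,dy + \frac2\pi \big(\int_{\Sigma_\lambda}\frac{dy}{(x-y_\lambda)^2}\big)w(x)$, with $\frac{1}{(x-y)^2}\ge\frac{1}{(x-y_\lambda)^2}$ on $\Sigma_\lambda$). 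On the right-hand side, since $t\mapsto e^t$ is increasing, wherever $w_\lambda<0$ we have $e^{u(x_\lambda)}-e^{u(x)}\le 0$ and so $w_\lambda$ satisfies a linear nonlocal inequality with good sign; the normalizing denominator $\int_I e^u$ is just a positive constant and plays no role. I would then argue: first, for $\lambda$ close to $1$, $\Sigma_\lambda$ is a tiny interval and since $u\in C^{1/2}(\R)\cap C^\infty(I)$ with $u\equiv0$ on $\R\setminus I$ and $u>0$ inside, one gets $w_\lambda\ge 0$ there (the reflected region $\{x_\lambda: x\in\Sigma_\lambda\}$ sits where $u$ is larger, plus a small-measure/maximum-principle argument); second, define $\lambda_0:=\inf\{\lambda: w_\mu\ge0 \text{ on }\Sigma_\mu \ \forall \mu\ge\lambda\}$ and show $\lambda_0=0$ by contradiction, using a nonlocal strong maximum principle (if $w_{\lambda_0}\ge 0$ but not $\equiv 0$, then $w_{\lambda_0}>0$ in the interior, and by continuity plus the small-measure maximum principle one can push $\lambda$ slightly below $\lambda_0$).

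The main obstacle is establishing the appropriate \emph{nonlocal maximum principle on domains of small measure} (a Brezis--Merle / Silvestre--Sirakov type lemma) that makes the above sliding argument work: one needs that if $(-\Delta)^{\frac12}w + c(x)w \ge 0$ in $\Sigma_\lambda$ with $w\ge 0$ outside $\Sigma_\lambda$, $w$ antisymmetric, and $|\Sigma_\lambda|$ small (depending on $\|c\|_\infty$, which here is controlled by $\sup e^u/\int_I e^u$, finite since $u$ is smooth on a neighborhood of any such small $\Sigma_\lambda$ near the boundary — and bounded on all of $I$ because $u$ is continuous on $\bar I$), then $w\ge 0$ in $\Sigma_\lambda$. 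Proving this requires some care because the standard ABP-type estimates are one-dimensional here but the nonlocal contribution must be handled honestly; alternatively one can invoke the version already in the literature for antisymmetric functions. Once that lemma is in hand, the rest is the by-now-classical bookkeeping. I would also remark that the boundary behavior of $u$ at $\pm1$ — it is only $C^{1/2}$ there, not $C^1$ — is not an issue, since the moving-plane argument only compares $u$ to its reflection and never differentiates at the boundary; the Green representation of Lemma \ref{green} with the explicit kernel \eqref{Green} can furthermore be used as a backup to run a purely integral version of the argument if the PDE formulation near the boundary is delicate.
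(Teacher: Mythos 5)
Your proposal is correct in outline, but it takes a genuinely different route from the paper. The paper proves Lemma \ref{MP} by reducing it to Theorem \ref{MP_Ap} in the Appendix, whose moving-plane argument is deliberately self-contained: the plane is started not by a smallness argument but by an explicit computation, via the Green representation with the explicit kernel $H$ from \eqref{Green}, showing $u'<0$ on $(1-\ve,1)$; and the critical position $\lambda^*$ is ruled out by taking a limit of maximum points, using the positivity of the antisymmetrized kernel to force the touching point onto the plane, and then applying the antisymmetric Hopf-type Lemma \ref{HL} (also proved in the Appendix) to contradict $u'(\lambda^*)=0$. You instead run the standard ``direct method'' of moving planes for the fractional Laplacian: the antisymmetric difference $w_\lambda$ solves a linear nonlocal inequality with bounded coefficient (bounded because $u\in L^\infty$ and $t\mapsto e^t$ is locally Lipschitz), the plane is started for $\lambda$ near $1$ by a narrow-region/small-measure maximum principle for antisymmetric functions, and the sliding past $\lambda_0$ uses an antisymmetric strong maximum principle plus the same narrow-region principle on the leftover set of small measure. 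This scheme does work in the present setting ($w_\lambda\ge 0$ automatically on $(1,\infty)$, all evaluation points lie in the interior of $I$ where $u$ is smooth, and the coefficient bound is uniform in $\lambda$), and the needed antisymmetric maximum principles exist in the literature (Chen--Li--Li, Jarohs--Weth, Fall--Jarohs type results). What your route buys is modularity and independence from the explicit Green kernel in the moving-plane step itself; what the paper's route buys is a fully self-contained proof — the authors explicitly say they could not find a reference fitting their assumptions, and the Hopf lemma plus the boundary-derivative computation are precisely their substitute for the maximum principles you leave as ``the main obstacle.'' If you do not import those principles with a precise citation, proving them is where essentially all the work lies, comparable to the paper's proof of Lemma \ref{HL}. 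Two minor points: the reflection identity you quote should be written with integrals over the half-line $(\lambda,\infty)$ (the reflection of $(\lambda,1)$ alone does not cover $\R$, and $w_\lambda$ does not vanish on all of $(1,\infty)$ — it is merely nonnegative there, which is exactly what the maximum principles require); and your parenthetical that for $\lambda$ near $1$ ``the reflected region sits where $u$ is larger'' is not an a priori fact but the conclusion itself — the small-measure principle alone must carry that step.
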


\begin{proof}
This follows at once from the moving plane technique, see Theorem \ref{MP_Ap} in the Appendix.
\end{proof}

\begin{prop}\label{poho}
Let $\hat{u}\in C^\frac{1}{2}(\R)\cap C^\infty(I)$ be a solution to $$\hat u(x)=\int_I G_x(y)e^{\hat u(y)}dy+c,$$ for some $c\in\R$. Then for $$ \rho:=\int_I e^{\hat u(y)}dy,$$ we have $\rho<2\pi$.
\end{prop}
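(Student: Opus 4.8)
The plan is to establish a Pohozaev-type identity for the non-local equation and extract the strict inequality $\rho < 2\pi$ from the positivity of the boundary contribution. I would work with $\hat u$ solving $\hat u(x) = \int_I G_x(y) e^{\hat u(y)}\,dy + c$ on $I=(-1,1)$, so that $(-\Delta)^{1/2}\hat u = e^{\hat u}$ in $I$ and $\hat u$ is constant ($=c$, actually $\hat u = c$ in $\R\setminus I$ since $G_x$ vanishes there) outside $I$. The natural Pohozaev multiplier in one dimension is $x\,\hat u'(x)$ (the generator of dilations), so I would compute $\int_I x\,\hat u'(x)\,(-\Delta)^{1/2}\hat u(x)\,dx = \int_I x\,\hat u'(x) e^{\hat u(x)}\,dx$. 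The right-hand side integrates by parts to $\int_I x\,\frac{d}{dx}(e^{\hat u})\,dx = [x e^{\hat u}]_{-1}^1 - \int_I e^{\hat u}\,dx = e^{\hat u(1)} + e^{\hat u(-1)} - \rho$; using $\hat u(\pm 1) = c$ by continuity and the fact that $\hat u \equiv c$ outside $I$ (since the $G$-term vanishes there), this is $2e^c - \rho$.

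The heart of the argument is to evaluate the left-hand side $\int_I x\,\hat u'(x)(-\Delta)^{1/2}\hat u\,dx$ using the non-local (singular integral / bilinear) representation of $(-\Delta)^{1/2}$ and a Rellich–Pohozaev identity adapted to the half-Laplacian. The cleanest route is probably the one that appears in the fractional Pohozaev literature (Ros-Oton–Serra): for $u$ supported in $\overline I$ one has an identity of the form
\begin{equation*}
\int_I x\,u'\,(-\Delta)^{1/2}u\,dx = -\tfrac{1}{2}\int_I u\,(-\Delta)^{1/2}u\,dx - \tfrac{\Gamma(3/2)^2}{2}\int_{\de I}\Big(\tfrac{u(x)}{\mathrm{dist}(x,\de I)^{1/2}}\Big)^2 (x\cdot\nu)\,d\sigma,
\end{equation*}
where the boundary term involves the "fractional normal derivative" $u/d^{1/2}$. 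Since $\hat u$ is not supported in $I$ but equals the constant $c$ there, I would apply this to $v := \hat u - c$, which does vanish outside $I$ and satisfies $(-\Delta)^{1/2}v = (-\Delta)^{1/2}\hat u = e^{\hat u}$ in $I$ (the half-Laplacian kills the constant on all of $\R$ in the distributional sense, but more carefully one must track that $(-\Delta)^{1/2}c$ is not literally zero as a function on $I$ when $c$ is only "locally" constant — here, though, $\hat u = c$ on all of $\R\setminus I$, and one checks that $\hat u - c$ extended by $0$ is the correct object). Then $\int_I x v'(-\Delta)^{1/2}v\,dx = \int_I x v' e^{\hat u}\,dx$, and the boundary term is $-C_{1/2}\sum_{\pm}\big(\lim_{x\to\pm1}\tfrac{v(x)}{d(x)^{1/2}}\big)^2$, which is strictly negative because $\hat u - c > 0$ in $I$ (by Corollary \ref{upos} applied to the equation for $v$, or directly since $G_x > 0$ and $e^{\hat u} > 0$) and its $C^{0,1/2}$ regularity forces $v/d^{1/2}$ to have a positive finite nonzero limit at the endpoints — this is exactly where the Hopf-type lemma for the half-Laplacian (again Ros-Oton–Serra) enters to guarantee the limit is strictly positive, not merely nonnegative.

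Putting the two computations together: the bulk terms $\int_I x v' e^{\hat u}\,dx$ on both sides must be reconciled (they should match up to the elementary integration by parts done above, giving $2e^c - \rho$), the term $-\tfrac12\int_I v(-\Delta)^{1/2}v\,dx = -\tfrac12\int_I (\hat u - c)e^{\hat u}\,dx$ appears, and the strictly negative boundary term $-B$ with $B>0$ appears. After bookkeeping one arrives at an identity of the shape $\rho = 2e^c - (\text{nonnegative bulk term}) - B < 2\pi$ once one also uses the sharp bound $e^c \le \pi$ or, more likely, the identity is self-contained and $2\pi$ emerges directly from the constant $C_{1/2}$ combined with a Jensen/normalization step; in the $n$-dimensional analogue the number $8\pi$ comes out of $2(n-2)|\de B|$-type constants, and here $n=1$ with the half-Laplacian the analogous constant is $2\pi$. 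I expect the main obstacle to be twofold: first, making the Pohozaev identity rigorous despite $x\hat u'$ not being an admissible test function (it is not in the energy space, and $\hat u'$ blows up like $d^{-1/2}$ at $\de I$), which requires a careful truncation/approximation argument and justification of all the limits; and second, correctly identifying the constant so that the threshold is exactly $2\pi$ rather than some other multiple — this is a matter of getting the normalization of $(-\Delta)^{1/2}$, the Green function, and the fractional boundary derivative all consistent. I would handle the first obstacle by regularizing (e.g. working with $\hat u_\delta$ on $I_\delta = (-1+\delta,1-\delta)$ or mollifying and passing to the limit using the explicit $d^{1/2}$ boundary asymptotics), and the second by cross-checking against the known Green function \eqref{Green}, whose logarithmic coefficient $\tfrac1\pi$ fixes all constants.
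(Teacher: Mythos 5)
There is a genuine gap at the decisive step: your Pohozaev identity, as you set it up, gives a \emph{lower} bound on $\rho$, not an upper bound, and the threshold $2\pi$ never actually appears. Concretely, for $n=1$, $s=\tfrac12$ the Ros-Oton--Serra identity has bulk coefficient $\tfrac{2s-n}{2}=0$ (your $-\tfrac12$ is incorrect), so applied to $v=\hat u-c$ it reads $\int_I x v'(-\Delta)^{\frac12}v\,dx=-B$ with $B=\tfrac{\Gamma(3/2)^2}{2}\sum_{\pm}\bigl(v/d^{1/2}\bigr)^2(\pm1)>0$. Since the left side equals $\int_I x\hat u'e^{\hat u}dx=2e^{c}-\rho$, you obtain $\rho=2e^{c}+B$: every term on the right is positive, so nothing bounds $\rho$ from above. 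Your final bookkeeping ``$\rho=2e^c-(\text{bulk})-B<2\pi$'' has the signs reversed, and the auxiliary bound $e^{c}\le\pi$ you invoke has no justification (in the intended application $c$ is an arbitrary constant; the only available relation $2e^c\le\rho$ is circular here). The point you are missing is that the inequality must be \emph{quadratic} in $\rho$: one needs a lower bound of the form $B\ge \tfrac{\rho^2}{2\pi}$, which combined with $\rho=2e^c+B>B$ gives $\rho<2\pi$. That quantitative link between the boundary term and $\rho^2$ is exactly where $2\pi$ comes from, and it is absent from your sketch; it could be supplied by computing $\lim_{x\to\pm1}v(x)/d(x)^{1/2}$ from the Green-function asymptotics $G_x(y)\sim\tfrac{\sqrt{2(1-x)}}{\pi}\sqrt{\tfrac{1+y}{1-y}}$ as $x\to1^-$, and then applying Cauchy--Schwarz and AM--GM to the two resulting weighted integrals of $e^{\hat u}$, but none of this is indicated.

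For comparison, the paper does not invoke the abstract fractional Pohozaev identity at all. It works directly with the representation $\hat u(x)=\int_I G_x(y)e^{\hat u(y)}dy+c$, splits $G_x(y)=-\tfrac1\pi\log|x-y|+H(x,y)$ with a cutoff regularization near the diagonal, and multiplies by $xe^{\hat u(x)}$. The antisymmetrized logarithmic kernel produces exactly the quadratic term $-\tfrac{\rho^2}{2\pi}$, the contribution of the regular part $H$ is shown to be strictly negative using the explicit formula for $\de_x H$ together with the evenness of $\hat u$ (moving planes, Lemma \ref{MP}), and the cutoff error vanishes. Comparing with $\int_I x\hat u'e^{\hat u}dx=2e^{\hat u(1)}-\rho>-\rho$ then yields $\tfrac{\rho^2}{2\pi}<\rho$, i.e.\ $\rho<2\pi$. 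So the quadratic-in-$\rho$ structure that your argument lacks is the engine of the paper's proof; your identity $\rho=2e^c+B$ is consistent with it, but as written it proves nothing about the size of $\rho$.
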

\begin{proof}
 We fix $\psi\in C^1((0,\infty))$ such that $\psi=0$ on $[0,1)$ and $\psi=1$ on $(2,\infty)$. Set for $\ve >0$ small enough, $\psi_\ve(x):=\psi(\frac x\ve)$. 
We can rewrite $\hat u$ as
 \begin{align}\label{hatu}\hat u(x)=\frac1\pi\int_I\log\left(\frac{1}{|x-y|}\right)\psi_\ve(|x-y|)e^{\hat u(y)}dy+\int_{I}H(x,y)e^{\hat u(y)}dy+w(x)+c,\end{align}
where 
\begin{equation}\label{w}
w(x):=w_{\psi,\ve}(x):=\frac1\pi\int_I\log\left(\frac{1}{|x-y|}\right)(1-\psi_\ve(|x-y|))e^{\hat u(y)}dy.
\end{equation}
Note that by definition of $\psi_\ve$ we integrate only on $[-2\ve,2\ve]$, so we obtain
$$\|w\|_{L^\infty(I)}\leq C\ve|\log\ve| \|e^{\hat u}\|_{L^\infty(I)}.$$
Moreover, $w\in C^1(I)$, which follows from  $\hat u \in C^1(I)$. 
Differentiating under the integral sign in \eqref{hatu} we get
 $$\hat u'(x)=\frac1\pi \int_I\frac{\partial }{\partial  x}\left( \log\frac{1}{|x-y|}\psi_\ve(|x-y|)\right)e^{\hat u(y)}dy+\int_I\frac{\partial }{\partial  x}H(x,y)e^{\hat u(y)}dy+w'(x).$$ 
 We define $I_1$ as the quantity that we obtain multiplying the above identity by $xe^{\hat u(x)}$ and integrating over $I$, i.e,
 \begin{equation*}
 I_1:=\int_I x\hat u'(x)e^{\hat u(x)}dx.
 \end{equation*}
 On the one hand, since $\hat{u}$ is even by Lemma \ref{MP}, integration by parts yields 
 \begin{equation}\label{I_1_1}
 I_1=2e^{\hat u(1)}-\int_I e^{\hat u}dx=2e^{\hat u(1)}-\rho.
 \end{equation}
 On the other hand, by definition
 \begin{align*}
 I_1&=\frac1\pi\int_I \int_Ix\frac{\partial }{\partial  x}\left( \log\frac{1}{|x-y|}\psi_\ve(|x-y|)\right)e^{\hat u(y)}e^{\hat u(x)}dydx \\ 
 &\qquad+\int_I\int_Ix\frac{\partial }{\partial  x}H(x,y)e^{\hat u(y)}e^{\hat u(x)}dydx+\int_Iw'(x)xe^{\hat u(x)}dx\\
 &=:I_2+I_3+I_4.
 \end{align*}Using that  $\psi_\ve=0$ on $[0,\ve]$ we obtain 
 \begin{align*}
 I_2&=\frac1\pi\int_I\int_I x\left(- \frac{\psi_\ve(|x-y|)}{x-y}+\log\frac{1}{|x-y|}\psi_\ve'(|x-y|)\frac{x-y}{|x-y|}\right)e^{\hat u(y)}e^{\hat u(x)}dydx\\
 &=-\frac{1}{2\pi}\int_I\int_ I \psi_\ve(|x-y|)e^{\hat u(y)}e^{\hat u(x)}dydx-\frac{1}{2\pi}\int_I\int_ I F(x,y)dydx\\&\quad +\frac{1}{2\pi}\int_I\int_ I \log\frac{1}{|x-y|}|x-y|\psi_\ve'(|x-y|) e^{\hat u(y)}e^{\hat u(x)}dydx \\
 &=:J_1+J_2+J_3,
 \end{align*} 
 where 
 $$F(x,y):=\frac{x+y}{x-y}\left( \psi_\ve(|x-y|)-\log\frac{1}{|x-y|}\psi_\ve'(|x-y|)|x-y|\right)e^{\hat u(y)}e^{\hat u(x)}.$$ 
By dominated convergence theorem, using the definition and regularity of $\psi$ we can assert $$J_1\xrightarrow{\ve\to0} -\frac{\rho^2}{2\pi},\quad J_3\xrightarrow{\ve\to0}0.$$ Moreover, since $F(x,y)=-F(y,x)$, we have   $J_2=0$. Therefore, we get 
\begin{equation}\label{I_2}
I_2\xrightarrow{\ve\to0}-\frac{\rho^2}{2\pi}.
\end{equation}
We claim now that $I_3<0$. To prove it, we first compute
\begin{align*}
x\frac{\partial }{\partial x}H(x,y)&=x\frac1\pi\frac{\partial }{\partial x}\log\left( \sqrt{(1-x^2)(1-y^2)}+1-xy\right)\\
&=\frac{x}{\pi}\frac{-y-x\sqrt{\frac{1-y^2}{1-x^2}}}{\sqrt{(1-x^2)(1-y^2)}+1-xy}\\
&\leq \frac{x}{\pi}\frac{-y }{\sqrt{(1-x^2)(1-y^2)}+1-xy},
\end{align*} 
This inequality together with Lemma \ref{MP} (which implies $\hat u(-x)=\hat u(x)$) prove the claim as follows
\begin{align*}
I_3&\leq \frac{-1}{\pi}\int_I\int_I\frac{xy }{\sqrt{(1-x^2)(1-y^2)}+1-xy}e^{\hat u_k(y)}e^{\hat u_k(x)}dydx\\
&=:\frac{-2}{\pi}\int_0^1\int_0^1 K(x,y)e^{\hat u_k(y)}e^{\hat u_k(x)}dydx\\
&<0,
\end{align*} where the last inequality follows from $$K(x,y):=xy\left( \frac{1 }{\sqrt{(1-x^2)(1-y^2)}+1-xy}-\frac{1 }{\sqrt{(1-x^2)(1-y^2)}+1+xy}\right)>0$$  on $ (0,1)\times(0,1)$. 

Finally we show that $I_4\to0$ as $\ve\to 0$. Indeed, integration by parts and the bound for the function $w$ defined in \eqref{w} hold 
 \begin{align*}
 I_4=-\int_I (1+\hat u')e^{\hat u}w dx+o_\ve(1)=o_\ve(1)+o_\ve(1)\int_I |\hat u'|e^{\hat u}dx \xrightarrow{\ve\to0}0,
 \end{align*} 
where we used that $\hat u'e^{\hat u}\in L^1(I)$.
Indeed, by Lemma \ref{MP},   $\hat u'\leq 0$ on $(0,1)$,  so we have  $$\int_0^1 |\hat u'|e^{\hat u}dx=\int_0^1 -(e^{\hat u})'dx=(e^{\hat u(0)}-1)<\infty.$$
Summarising, we obtain that 
$$I_1=I_2+I_3+I_4<I_2+I_4\xrightarrow{\ve\to0}-\frac{\rho^2}{2\pi},$$
The proposition follows immediately from \eqref{I_1_1}. 
\end{proof}

\section{Proof of Theorem \ref{trm1}}
 We set 
\begin{align}\label{ukhat}
\hat u_k:=u_k-\alpha_k,\quad \alpha_k:=\log\bigg(\frac{\int_I e^{u_k}dx}{\rho_k}\bigg).
\end{align}
Using Lemma \ref{green} we write
\begin{equation}\label{ukhatrep}
\hat u_k(x)=\int_I G_x(y)e^{ \hat u_k(y)}dy-\alpha_k,\quad \int_Ie^{\hat u_k}dx=\rho_k,
\end{equation}
and
\begin{equation}\label{ukrep}
u_k(x)=\int_I G_x(y)e^{ \hat u_k(y)}dy.
\end{equation}

If $\hat u_k(0)\leq C$, then by \eqref{ukrep} $u_k$ is bounded in $C^{0,\alpha}([-1,1])$ for every $\alpha\in [0,\tfrac{1}{2}]$ and in $C^\ell_{\loc}(-1,1)$ for $\ell\ge 0$, so that possibility $(i)$ in the theorem occurs. 



In the following we assume that $\hat u_k(0)\to\infty$ and we shall set
$$r_k:=2e^{-\hat u_k(0)}\to0.$$

\begin{lemma}\label{conveta} Assume that $\hat u_k(0)\to\infty$. Then we have 
\begin{itemize}
\item[i)] $r_k u_k(0)\to 0$.
\item[ii)] $\eta_k(x):=\hat u_k(r_k x)+\log(r_k) \to\eta_0(x):=\log\frac{2}{1+x^2}\quad \text{in }C^{\infty}_{loc}(\R).$
\item[iii)] $\lim_{R\to\infty}\lim_{k\to\infty}\int_{-Rr_k}^{Rr_k}e^{\hat u_k}dx=2\pi.$
\item[iv)] $\alpha_k\to\infty$. 
\item[v)] $\hat u_k\to-\infty$ in $C^0_{loc}(\bar I\setminus\{0\})$.
\end{itemize}
\end{lemma}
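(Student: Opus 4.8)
The plan is to rescale at the concentration scale $r_k$, identify the rescaled functions' limit as the standard bubble $\eta_0$, and then read off \emph{(iii)}--\emph{(v)} from the resulting concentration of mass; \emph{(i)} rests only on the maximum principle and could be proved first. Applying Proposition \ref{poho} to $\hat u_k$ with $c=-\alpha_k$ gives $\rho_k<2\pi$, so $\int_I e^{\hat u_k}dx=\rho_k$ stays bounded and, along a subsequence, $\rho_k\to\bar\rho\le 2\pi$. By Lemma \ref{MP} the function $\hat u_k$ is even and radially decreasing, so $\hat u_k(0)=\max_{\R}\hat u_k$ and the choice of $r_k$ gives $\eta_k(0)=\log 2$. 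Since $(-\Delta)^\frac12\hat u_k=e^{\hat u_k}$ in $I$, the scaling of $(-\Delta)^\frac12$ shows
\[(-\Delta)^\frac12\eta_k=e^{\eta_k}\ \text{ in }\ \Omega_k:=(-\tfrac1{r_k},\tfrac1{r_k}),\qquad \eta_k\le\eta_k(0)=\log 2,\qquad \int_{\Omega_k}e^{\eta_k}dx=\rho_k<2\pi,\]
with $\eta_k\equiv\log r_k-\alpha_k$ on $\R\setminus\Omega_k$. For \emph{(i)}, from $u_k(0)=\int_I G_0(y)e^{\hat u_k(y)}dy$, the bound $G_0(y)\le\tfrac1\pi\log\tfrac2{|y|}$ and $e^{\hat u_k}\le e^{\hat u_k(0)}=2/r_k$, splitting at $|y|=r_k$ gives $u_k(0)=O(\hat u_k(0))$, hence $\alpha_k=u_k(0)-\hat u_k(0)=O(\hat u_k(0))$, and so $r_k u_k(0)=2e^{-\hat u_k(0)}(\hat u_k(0)+\alpha_k)\to 0$.

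For \emph{(ii)}, fix $R>0$ and split $\eta_k=\phi_k+\psi_k$ on $B_{2R}$, where $(-\Delta)^\frac12\phi_k=e^{\eta_k}$ in $B_{2R}$ and $\phi_k=0$ on $\R\setminus B_{2R}$, so $\psi_k:=\eta_k-\phi_k$ is $\tfrac12$-harmonic in $B_{2R}$. Since $0<e^{\eta_k}\le 2$ and $\int_{B_{2R}}e^{\eta_k}\le\rho_k$, the Green representation on the ball bounds $\phi_k$ in $L^\infty(\R)$, with improved interior regularity whenever $e^{\eta_k}$ has it. Because $\eta_k\le\log 2$ everywhere and $\phi_k$ is bounded, $C_R-\psi_k\ge 0$ on all of $\R$ for a suitable $C_R$ independent of $k$; as $\psi_k(0)=\log 2-\phi_k(0)$ is bounded, the nonlocal Harnack inequality for globally nonnegative $\tfrac12$-harmonic functions yields $\sup_{B_R}(C_R-\psi_k)\le c_R(C_R-\psi_k(0))$, so $\psi_k$, and hence $\eta_k$, is bounded in $L^\infty(B_R)$. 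Now bootstrap: a globally nonnegative $\tfrac12$-harmonic function has interior $C^\ell$-bounds controlled by its sup norm alone (the weighted-$L^1$ tail being absorbed via Harnack and the Poisson-kernel lower bound $P_{B_{2R}}(0,y)\gtrsim (1+y^2)^{-1}$), which bounds $\psi_k$ in $C^\ell_{\loc}(B_{2R})$, and the regularity of $\phi_k$ then improves via the equation. Iterating, $\eta_k$ is bounded in $C^\ell_{\loc}(\R)$ for all $\ell$, so along a subsequence $\eta_k\to\eta_0$ in $C^\infty_{\loc}(\R)$, with $\eta_0\le\eta_0(0)=\log 2$ and, by Fatou, $\int_\R e^{\eta_0}\le\bar\rho\le 2\pi$. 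Passing to the limit in the equation (the delicate point, addressed below) gives $(-\Delta)^\frac12\eta_0=e^{\eta_0}$ in $\R$, and the classification of solutions of the one-dimensional fractional Liouville equation, together with $\eta_0(0)=\max\eta_0=\log 2$, forces $\eta_0(x)=\log\tfrac2{1+x^2}$ and $\int_\R e^{\eta_0}=2\pi$. This is \emph{(ii)}.

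Now \emph{(iii)} is immediate: $\int_{-Rr_k}^{Rr_k}e^{\hat u_k}dx=\int_{-R}^{R}e^{\eta_k}dx\to\int_{-R}^{R}e^{\eta_0}dx\to 2\pi$ on letting first $k\to\infty$ and then $R\to\infty$; in particular $\rho_k\to 2\pi$ and $\int_{\{|x|>\delta\}}e^{\hat u_k}dx\to 0$ for every fixed $\delta>0$. For \emph{(iv)}, using $G_0(y)\ge-\tfrac1\pi\log|y|\ge-\tfrac1\pi\log(Rr_k)$ on $\{|y|<Rr_k\}$ together with $-\log r_k=\hat u_k(0)-\log 2$ and $\int_{|y|<Rr_k}e^{\hat u_k}\to\int_{-R}^R e^{\eta_0}$,
\[\hat u_k(0)+\alpha_k=u_k(0)\ge\Bigl(-\tfrac1\pi\log(Rr_k)\Bigr)\int_{|y|<Rr_k}e^{\hat u_k}dx\]
yields $\liminf_k\alpha_k/\hat u_k(0)\ge\tfrac1\pi\int_{-R}^R e^{\eta_0}-1$ for every $R$, hence $\ge 1$, so $\alpha_k\to\infty$. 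For \emph{(v)}, given $\delta>0$ and $|x|\ge\delta$, split $u_k(x)=\int_I G_x(y)e^{\hat u_k(y)}dy$ over $\{|y|<\delta/2\}$, where $G_x\le C_\delta$, and over $\{|y|\ge\delta/2\}$, where monotonicity gives $e^{\hat u_k(y)}\le e^{\hat u_k(\delta/2)}$ (which is bounded, again by monotonicity) and $\int_I G_x(y)dy$ is bounded; thus $u_k(x)\le\bar C_\delta$ uniformly for $|x|\ge\delta$, and $\hat u_k=u_k-\alpha_k\to-\infty$ uniformly there (and trivially on $\R\setminus I$).

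The substantive difficulty lies in making the limit $\eta_k\to\eta_0$ compatible with the nonlocal operator, i.e.\ in justifying $(-\Delta)^\frac12\eta_0=e^{\eta_0}$ on all of $\R$. Since $\eta_k\equiv\log r_k-\alpha_k$ far out, the weighted tail $\int\frac{|\eta_k(y)|}{1+y^2}\,dy$ involves $\eta_k$ on the ``neck'' between scales $r_k$ and $1$, and controlling it uniformly in $k$ needs an a priori decay estimate of the type $e^{\eta_k(x)}\le C(1+x^2)^{-1}$ on $\Omega_k$, equivalently $e^{\hat u_k(s)}\le C r_k(r_k^2+s^2)^{-1}$ on $I$. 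I expect this from a comparison/potential argument on dyadic annuli centred at the origin, exploiting the radial monotonicity of $\hat u_k$ and the bound $\rho_k<2\pi$; this estimate and the appeal to the fractional Liouville classification are where the real work lies, the rest being routine.
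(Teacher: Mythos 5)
Your parts \emph{(i)}, \emph{(iii)}, \emph{(iv)} and \emph{(v)} are essentially correct (and \emph{(i)} via splitting at $|y|=r_k$, as well as the direct lower-bound proof of \emph{(iv)} from $G_0(y)\ge -\tfrac1\pi\log|y|$, are if anything cleaner than the paper's contradiction arguments). The genuine gap is in \emph{(ii)}, and it is exactly the point you flag and then defer: justifying that the local limit $\eta_0$ solves $(-\Delta)^{\frac12}\eta_0=e^{\eta_0}$ on all of $\R$ in the distributional sense, so that the Da Lio--Martinazzi(--Rivi\`ere) classification applies. Your Harnack/bootstrap decomposition $\eta_k=\phi_k+\psi_k$ does give $C^\infty_{\loc}$ compactness, and (via the Poisson kernel for globally nonnegative $\tfrac12$-harmonic functions) even a uniform bound on $\int_\R\frac{|\eta_k(y)|}{1+y^2}\,dy$; but a uniform \emph{bound} is not enough. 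Since for a Schwartz test function $\varphi$ one only has $|(-\Delta)^{\frac12}\varphi(x)|\approx |x|^{-2}$ at infinity (no faster decay unless $\int\varphi=0$), passing to the limit in $\int_\R \eta_k(-\Delta)^{\frac12}\varphi\,dx$ requires the tails $\int_{|x|>R}\frac{|\eta_k(x)|}{1+x^2}\,dx$ to be small \emph{uniformly in $k$} for $R$ large, i.e.\ equi-smallness across the neck region $R<|x|<r_k^{-1}$. This is precisely Step 2 of the paper's proof, and without it \emph{(ii)} --- and hence \emph{(iii)}--\emph{(v)}, which you correctly derive from it --- is not established.

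Moreover, the substitute you propose, a pointwise decay $e^{\eta_k(x)}\le C(1+x^2)^{-1}$ on $\Omega_k$ ``by comparison on dyadic annuli'', is a strictly stronger statement (a sharp sup+inf--type neck estimate) than what is needed, it is nowhere proved in your proposal, and before the quantization is known it is essentially equivalent to excluding mass on the neck, so the sketched route risks circularity. The paper avoids any pointwise decay: it writes $\eta_k(x)-\log 2=u_k(r_kx)-u_k(0)$ through the Green representation, splits off the harmonic part $H$, and bounds $\int_{R<|x|<r_k^{-1}}\frac{|f_k(x)|}{1+x^2}\,dx$ by Fubini together with the elementary estimate $\int_{I_R}\bigl|\log\tfrac{|y|}{|x-y|}\bigr|\frac{dx}{1+x^2}<\ve$ for $|y|\ge1$, using only $\int_I e^{\hat u_k}\,dx=\rho_k<2\pi$. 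Replacing your deferred decay estimate by this direct tail computation (or by an equivalent uniform equi-smallness argument) is what is needed to close the proof; combined with your equicontinuity-by-Harnack step it would then yield \emph{(ii)} along the same lines as the paper's Steps 2--4.
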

\begin{proof}

\noindent\textbf{Step 1} We show that $r_k u_k(0)\to0$.

Indeed from \eqref{ukrep}, for every $\delta>0$ \begin{align*}u_k(0)&= \left(\int_{|y|<\delta}+\int_{\delta<|y|<1}\right)G_0(y)e^{\hat u_k(y)}dy\\
 &\leq  e^{\hat u_k(0)}\int_{|y|<\delta}G_0(y)dy+e^{\hat u_k(\delta)}\int_{\delta<|y|<1}G_0(y)dy\\ &\leq C e^{\hat u_k(0)}\delta|\log\delta|+e^{\hat u_k(\delta)} .\end{align*} 
Note that for both inequalities  we have used that, by Lemma \ref{MP}, $\hat{u}$ is decreasing on $|y|$.
 Since $\delta>0$ is arbitrary small, and $\hat u_k(\delta)\not \to \infty$ (otherwise Proposition \ref{poho} would be violated), multiplying both sides of the inequality by $r_k$, letting $k\to \infty$ and $\delta\to 0$ we complete the proof of $i)$.
 
\medskip 

We will divide the proof of part $ii)$ in three main steps:

\noindent\textbf{Step 2} For every $\ve>0$ there exists $R\gg 1$ such that for $k$ large $$\int_{|x|>R}\frac{|\eta_k(x)|}{1+x^2}dx<\ve.$$

On the one hand, by definition of $\eta_k$ and $r_k$ and by \eqref{ukrep} (which implies $u(x)=0$ if $|x|>1$) we obtain that $\eta_k(x)=\log r_k -\alpha_k=\log 2- u_k(0)$ for $|x|>r_k^{-1}$. Then, we can assert that
$$\int_{|x|>r_k^{-1}}\frac{|\eta_k(x)|}{1+x^2}dx\leq C u_k(0)r_k\xrightarrow{k\to\infty}0.$$

On the other hand, again by definition of $\eta_k$ and $r_k$ and by \eqref{ukrep}, for $|x|<r_k^{-1}$ we have
\begin{align*}
\eta_k(x)-\log 2&=u_k(r_k x)-u_k(0)\\
&=\frac1\pi\int_{I}\log\frac{|y|}{|r_kx-y|}e^{\hat u_k(y)}dy+\frac1\pi\int_I(H(r_kx,y)-H(0,y))e^{\hat u_k(y)}dy \\
&=:f_k(x)+g_k(x).
\end{align*}
First, we bound the first integral as follows. Changing the variable $y\mapsto r_ky$ we obtain
$$f_k(x)=\int_{|y|<r_k^{-1}}\log\bra{\frac{|y|}{|x-y|}}e^{\eta_k(y)}dy,$$
and with Fubini's theorem we bound 
\begin{align}\label{I_R}
\int_{I_R}\frac{|f_k(x)|}{1+x^2}dx&\leq C\int_{|y|<r_k^{-1}}e^{\eta_k(y)}\int_{I_R}\left|\log\frac{|y|}{|x-y|}\right|\frac{dx}{1+x^2}dy,\quad I_R:=(-r_k^{-1},r_k^{-1})\setminus(-R,R).
\end{align}
We  claim that for $R$ sufficiently large $$\int_{I_R}\frac{|f_k(x)|}{1+x^2}dx<\ve.$$
By the previous bound \eqref{I_R}, this would follow immediately once we prove  \begin{align}\label{fk}\int_{I_R}\left|\log\frac{|y|}{|x-y|}\right|\frac{dx}{1+x^2}<\ve\quad\text{for }|y|\geq 1.\end{align} Note  that the inequality is trivial if $|y|<1$. Splitting $I_R$ into $$I_R=\cup_{i=1}^3A_i,\quad A_1:=\{|x|\leq \frac{|y|}{2}\}\cap I_R,\quad A_2:=\{|x|\geq 2|y|\}\cap I_R,\quad A_3:=I_R\setminus (A_1\cup A_2)$$ we write $$\int_{I_R}\left|\log\frac{|y|}{|x-y|}\right|\frac{dx}{1+x^2}=\sum_{i=1}^3\int_{A_i}\left|\log\frac{|y|}{|x-y|}\right|\frac{dx}{1+x^2}=:(I_1)+(I_2)+(I_3).$$ Using that $|x-y|\approx |y|$ on $A_1$ one gets $$(I_1)\leq C\int_{|x|>R}\frac{dx}{1+x^2}<\frac{\ve}{4}.$$ Since $|x-y|\approx |x|$ on $A_2$  $$(I_2)\leq C\int_{|x|\geq R}\frac{\log |x|}{1+x^2}dx<\frac{\ve}{4}\quad\text{for }|y|\geq 1.$$ 
Finally, we have $|y|\approx |x|$ on $A_3$, and using the assumption $|y|\geq 1$, we get for $R$ large enough
\begin{align*}(I_3)&\leq C\int_{|x|\geq R}\frac{\log |x|}{1+x^2}dx+C\min\{\frac{1}{R^2},\frac{1}{y^2} \}\int_{A_3}|\log|x-y||dx\\
&\leq \frac{\ve}{8} +C\min\{\frac{1}{R^2},\frac{1}{y^2} \}|y|\log(1+|y|)\\
&<\frac{\ve}{4}. 
\end{align*}
This proves \eqref{fk}. 

Using that $|H(x,y)|\leq C+|\log (1-|x|)|$, one easily gets $$\int_{I_R}\frac{|g_k(x)|}{1+x^2}dx<\ve\quad\text{for } R\gg1.$$  Step 2 follows. 

\medskip 

\noindent\textbf{Step 3} (Equicontinuity) For every $\ve>0$ and $R>0$ there exists $\delta=\delta(\ve,R)>0$ such that $$|\eta_k(x_1)-\eta_k(x_2)|<\ve\quad\text{for }|x_1-x_2|<\delta\text{ with }x_1,x_2\in (-R,R).$$

We have
\begin{align*}
\eta_k(x_1)-\eta_k(x_2)&=\frac1\pi\int_{I}\log\frac{|r_kx_2-y|}{|r_kx_1-y|}e^{\hat u_k(y)}dy+\frac1\pi\int_I(H(r_kx_1,y)-H(r_kx_2,y))e^{\hat u_k(y)}dy \\
&=:f_k(x_1,x_2)+g_k(x_1,x_2).
\end{align*}
It is easy to see, using the continuity of $H$, that $|g_k(x_1,x_2)|<\ve$ if $\delta>0$ is sufficiently small. For $M\gg R$ 
\begin{align*}
|f_k(x_1,x_2)|&\leq\frac1\pi\left( \int_{|y|\leq Mr_k}+\int_{Mr_k\leq |y|\leq 1}\right)\left|\log\frac{|r_kx_2-y|}{|r_kx_1-y|}\right|e^{\hat u_k(y)}dy\\
&=\frac1\pi\int_{|y|\leq M}\left|\log\frac{|x_2-y|}{|x_1-y|}\right|e^{\eta_k(y)}dy+\frac1\pi\int_{M\leq |y|\leq r_k^{-1}}\left|\log\frac{|x_2-y|}{|x_1-y|}\right|e^{\eta_k(y)}dy\\
&=:(I)+(II).
\end{align*}
As $\eta_k\leq \log 2$, for every fixed $M>0$ we can choose $\delta>0$ so that $(I)<\ve$.  Since $$\frac{|x_2-y|}{|x_1-y|}=1+|x_1-x_2|O(\frac1M)\quad \text{for }x_1,x_2\in (-R,R)\text{ and }|y|\geq M>>R,$$ one gets $$(II)\leq \frac CM|x_1-x_2|\int_{|y|\leq r_k^{-1}}e^{\eta_k(y)}dy \leq \frac CM|x_1-x_2|\leq C\frac\delta M.$$  

This proves Step 3. 

\medskip 

\noindent\textbf{Step 4} (Up to a subsequence) $\eta_k\to \eta$ in $C^0_{loc}(\R)$ where $\eta$ satisfies ($\mathcal{S}(\R)$ is the Schwartz space) $$\int_\R \eta(-\Delta)^\frac12\vp dx=\int_\R e^\eta\vp dx\quad\text{for every }\vp\in\mathcal{S}(\R).$$
This follows directly, by Ascoli-Arzel\'a Theorem, from Steps 2 and 3.
Then, by a classification results, see e.g. \cite[Theorem 1.8]{DMR} or \cite[Theorem 1.7]{DM}, $\eta= \eta_0$, and $ii)$ is proven. 

Moreover, as a corollary of $ii)$ we obtain $iii)$:
$$\lim_{R\to\infty}\lim_{k\to\infty}\int_{-Rr_k}^{Rr_k}e^{\hat u_k}dx=\lim_{R\to\infty}\int_{-R}^{R}\frac{2}{1+x^2}dx=2\pi.$$

\medskip 

\noindent\textbf{Step 5} We prove here $\alpha_k\to\infty$. 

Assume by contradiction that $\alpha_k\not\to\infty$.  Then for every $\ve>0$ and for $k$ large,   from \eqref{ukhatrep}, and together with $ii)$ 
$$\hat u_k(x)\geq \frac{1}{\pi}\int_{I}\log\left(\frac{1}{|x-y|}\right)e^{\hat u_k(y)}dy-C\geq \frac32\log \frac{1}{|x|}-C,\quad \ve\leq  |x|\leq 1 .$$ This 
 contradicts $\rho_k<2\pi$, thanks to Proposition \ref{poho}.  Thus, part $iv)$ is proved.

\medskip 

\noindent\textbf{Step 6}  $\hat u_k\to-\infty$ in $C^0_{loc}(\bar I\setminus\{0\})$.

Since $\hat u_k$ is monotone decreasing, it is sufficient to show that $\hat u_k(\ve)\to-\infty$ for every $\ve>0$.  As $\hat u_k(\frac \ve2)\not\to\infty$, which follows from $\rho_k<2\pi$ and the monotonicity of $\hat u_k$,   we have\begin{align*}\hat u_k(\ve)+\alpha_k&\leq  C(1+|\log\ve|)+\frac1\pi\int_{|y-\ve|<\frac\ve2}  \log\left(\frac{1}{|\ve-y|}\right)e^{\hat u_k(y)}dy\\
&\leq C(1+|\log\ve|)+Ce^{\hat u_k(\frac\ve2)} \\
&\leq C(\ve).\end{align*} 
This  bound, together with Step 5, implies Step 6. In this way, we have proved part $v)$, and with it, the whole Lemma. 
\end{proof}

\begin{lemma}\label{convGreen} For $\sigma\in (0,\frac12)$ we have
\begin{equation}\label{limuk}
u_k\to 2\pi G_0\text{ in }C^{0,\sigma}_{\loc}(\bar I\setminus \{0\}).
\end{equation}
\end{lemma}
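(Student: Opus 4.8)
The plan is to start from the Green representation \eqref{ukrep}, $u_k(x)=\int_I G_x(y)e^{\hat u_k(y)}\,dy$, and to feed in the concentration of mass provided by Lemma~\ref{conveta}. First I would record that $\rho_k=\int_I e^{\hat u_k}\,dy\to 2\pi$: indeed $\rho_k<2\pi$ by Proposition~\ref{poho}, while $\int_{-Rr_k}^{Rr_k}e^{\hat u_k}\,dy\le\rho_k$ together with Lemma~\ref{conveta}(iii) gives $\liminf_k\rho_k\ge 2\pi$ (this proves $\rho_k\to 2\pi$, i.e. \eqref{quant_1}). Setting $d\mu_k:=e^{\hat u_k}\mathbf 1_I\,dy$, Lemma~\ref{conveta}(v) shows $e^{\hat u_k}\to 0$ uniformly on each $\{\delta\le|y|\le1\}$, whence $\mu_k\rightharpoonup 2\pi\delta_0$ weakly-$*$ on $\bar I$; combining these facts with parts (ii)--(iii) I would also extract, for every fixed $\delta>0$,
$$\int_{\delta\le|y|\le1}e^{\hat u_k}\,dy\to 0,\qquad \int_{|y|<\delta}|y|\,e^{\hat u_k}\,dy\to 0,$$
the second because $\int_{|y|<Rr_k}|y|e^{\hat u_k}\,dy=r_k\int_{|y|<R}|y|e^{\eta_k}\,dy\to 0$ for fixed $R$, while the leftover mass on $\{Rr_k\le|y|<\delta\}$ is bounded by $\delta\,(\rho_k-\int_{|y|<Rr_k}e^{\hat u_k}\,dy)$, which is small for $R$ large.

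Next, given a compact $K\subset\bar I\setminus\{0\}$, set $d:=\dist(K,\{0\})$ and split, using the symmetry $G_x(0)=G_0(x)$,
$$u_k(x)-2\pi G_0(x)=\underbrace{\int_{|y|<d/2}\bigl(G_x(y)-G_x(0)\bigr)e^{\hat u_k(y)}\,dy}_{=:\,\Phi_k(x)}+\,c_k\,G_0(x)+\underbrace{\int_{d/2\le|y|\le1}G_x(y)e^{\hat u_k(y)}\,dy}_{=:\,B_k(x)},\quad x\in K,$$
with $c_k:=(\rho_k-2\pi)-\int_{d/2\le|y|\le1}e^{\hat u_k}\,dy\to 0$. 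Since $G_0\in C^{0,1/2}(K)$ ($G_0$ is smooth on $I\setminus\{0\}$ and $G_0(x)=O(\sqrt{1-|x|})$ near $\pm1$), the middle term tends to $0$ in $C^{0,1/2}(K)$. For $B_k$ one has $0\le e^{\hat u_k}\le\ve_k\to 0$ on $\{d/2\le|y|\le1\}$ by Lemma~\ref{conveta}(v), so it suffices to prove the uniform bound
$$\int_{d/2\le|y|\le1}\bigl|G_{x_1}(y)-G_{x_2}(y)\bigr|\,dy\le C(d)\,|x_1-x_2|^{1/2}\qquad(x_1,x_2\in\bar I),$$
which gives $\|B_k\|_{C^{0,1/2}(K)}\le C(d)\,\ve_k\to 0$. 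Writing $G_x(y)=-\tfrac1\pi\log|x-y|+H(x,y)$, the logarithmic part is classical; for $H$ one uses $|\partial_x H(x,y)|\lesssim (1-x^2)^{-1/2}(1-|y|)^{-1/2}+(1-|y|)^{-1}$ together with the integrability of $y\mapsto|\log(1-|y|)|$ near $\pm1$, splitting the $y$-integral according to whether $1-|y|$ exceeds $|x_1-x_2|$ or not.

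For the inner term I would interpolate. On $\{x\in K,\ |y|<d/2\}$ one has $|x-y|\ge d/2$ and a direct computation gives $|\partial_y G_x(y)|\le C(d)$, hence $|G_x(y)-G_x(0)|\le C(d)|y|$ and, by the first paragraph, $\|\Phi_k\|_{L^\infty(K)}\le C(d)\int_{|y|<d/2}|y|e^{\hat u_k}\,dy\to 0$. On the other hand $\Phi_k=A_k-G_0\int_{|y|<d/2}e^{\hat u_k}\,dy$ with $A_k(x):=\int_{|y|<d/2}G_x(y)e^{\hat u_k(y)}\,dy$; since $\sup_{|y|<d/2}|G_{x_1}(y)-G_{x_2}(y)|\le C(d)|x_1-x_2|^{1/2}$ for $x_1,x_2\in\bar I$ (here $y$ stays away from $\pm1$, so only $|\partial_x H|\lesssim (1-x^2)^{-1/2}$ and the $C^{0,1/2}$-regularity of $\arcsin$ on $[-1,1]$ enter) and $\int e^{\hat u_k}\le 2\pi$, the family $(\Phi_k)$ is bounded in $C^{0,1/2}(K)$ uniformly in $k$. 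Interpolation then yields $[\Phi_k]_{C^{0,\sigma}(K)}\le C\,[\Phi_k]_{C^{0,1/2}(K)}^{2\sigma}\,\|\Phi_k\|_{L^\infty(K)}^{1-2\sigma}\to 0$ for every $\sigma<\tfrac12$, so $\Phi_k\to 0$ in $C^{0,\sigma}(K)$. Adding the three contributions gives $u_k\to 2\pi G_0$ in $C^{0,\sigma}(K)$, and since $K$ was an arbitrary compact subset of $\bar I\setminus\{0\}$, the lemma follows.

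I expect the main obstacle to be the behaviour near the endpoints $\pm1$: there $G_x(y)$, and $G_0$ itself, is only Hölder-$\tfrac12$ (not Lipschitz) in $x$, so one cannot hope for $C^1_{\loc}$-convergence up to the boundary and must instead pass through a uniform $C^{0,1/2}$ bound followed by interpolation — which is precisely what produces, and confines one to, the exponent $\sigma<\tfrac12$, consistently with the fact that the $u_k$ only lie in $C^{0,1/2}(\R)$. The genuinely technical point is thus the uniform estimate on $\int_{d/2\le|y|\le1}|G_{x_1}-G_{x_2}|\,dy$ near the corner where $x_1,x_2$ and $y$ may all tend to $\pm1$ simultaneously: the weak convergence $\mu_k\rightharpoonup 2\pi\delta_0$ alone does not handle it and one must exploit the explicit form of $H$.
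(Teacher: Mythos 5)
Your argument is correct and follows essentially the same route as the paper: Green representation, $\rho_k\to 2\pi$ via Proposition \ref{poho} and Lemma \ref{conveta}(iii), decay of $e^{\hat u_k}$ away from the origin via Lemma \ref{conveta}(v), a uniform $C^{0,1/2}$ bound coming from the Lipschitz control of the logarithmic kernel away from its singularity together with the boundary estimate $|H(x_1,y)-H(x_2,y)|\le C|x_1-x_2|^{1/2}(1-y^2)^{-1/2}$ (the paper's \eqref{est-H}), and finally interpolation (the paper uses Ascoli--Arzel\`a, which is the same mechanism) to pass to $C^{0,\sigma}$ for $\sigma<\tfrac12$. The only slip is cosmetic: the claimed bound $\sup_{|y|<d/2}|G_{x_1}(y)-G_{x_2}(y)|\le C(d)|x_1-x_2|^{1/2}$ is false for arbitrary $x_1,x_2\in\bar I$ (the logarithmic part is unbounded when $x_i$ can approach $y$), but it is true --- and is all you actually use --- for $x_1,x_2\in K$, where $|x_i-y|\ge d/2$.
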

\begin{proof} \textbf{$C^0$ convergence:} 
We write \begin{align*}u_k(x)-2\pi G_0(x)&=\int_I \left(G_x(y)-G_0(x)\right)e^{\hat u_k(y)}dy+(\rho_k-2\pi)G_0(x)\\
&=:v_k(x)+(\rho_k-2\pi)G_0(x).
\end{align*}
It follows that  $(\rho_k-2\pi)G_0\to0$ in $C^\infty(\bar I)$, thanks to Proposition \ref{poho} and part $iii)$ of Lemma \ref{conveta}. For $0<\ve\leq |x|\leq 1$  we have $$|G_x(y)-G_0(x)|\ll\ve\quad\text{if }|y|\ll\ve $$ and $$|G_x(y)-G_0(x)|\leq C+C|\log|x-y||+C|\log (1-|y|)|\quad\text{for } |y|<1.$$  This bound together with part $v)$ of Lemma \ref{conveta} would imply $v_k\to0$ in $C^0_{loc}(\bar I\setminus\{0\})$. 

We claim that  $$[u_k]_{C^{0,\frac12}((\ve,1))}\leq C(\ve)\quad\text{for every }  \ve>0. $$
Then the $C^{0,\sigma}_{\loc}(\bar I\setminus\{0\})$ convergence for $\sigma<\frac12$ will follow immediately from the Ascoli-Arzerl\`a Theorem.

For $x\in (\ve,1)$ and $h>0$  with $x+h\leq 1$ we have \begin{align*}u_k(x+h)-u_k(x)&=\frac1\pi\int_I\log\frac{|x-y|}{|x+h-y|}e^{\hat u_k(y)}dy+\int_I(H(x+h,y)-H(x,y))e^{\hat u_k(y)}dy\\
&=:I_1+I_2\end{align*} Since $$\log\frac{|x-y|}{|x+h-y|}=O_\ve(h)\quad \text{for }|y|\leq\frac\ve2,\quad x\geq\ve,\quad h>0,$$ and $\hat u_k\to-\infty$ in $C^0_{loc}(\bar I\setminus\{0\})$ by part $v)$ in Lemma \ref{conveta}, we obtain  
\begin{align*}
|I_1|\leq C_\ve h+C\int_{I}\left| \log\frac{|x-y|}{|x+h-y|}\right|dy\leq C_\ve h|\log h|.
\end{align*}
In order to bound $I_2$ we use  \begin{align}&H(x+h,y)-H(x,y)\notag\\&=\int_0^1\frac{\partial }{\partial t}H(x+th,y)dt \notag\\
&=\frac{h}{\pi}\int_0^1\frac{-y-(x+th)\sqrt{\frac{1-y^2}{1-(x+th)^2}}}{\sqrt{(1-(x+th)^2)(1-y^2)}+1-(x+th)y}dt \notag\\
&=O(h) \frac{1}{\sqrt{1-y^2}}\int_0^1\frac{dt}{\sqrt{1-(x+th)^2}}+O(h)\int_0^1\frac{\sqrt{1-y^2}}{1-(x+th)y}\frac{dt}{\sqrt{1-(x+th)^2}}\notag\\
&=O(h)\frac{1}{\sqrt{1-y^2}}\int_0^1\frac{dt}{\sqrt{1-(x+th)^2}} \notag\\
&=O(h)\frac{1}{\sqrt{1-y^2}}\int_0^1\frac{dt}{\sqrt{1-(x+th)}} \notag\\
&=O(1)\frac{1}{\sqrt{1-y^2}}(\sqrt{1-x}-\sqrt{1-x-h}) \notag\\
&=O(\sqrt h)\frac{1}{\sqrt{1-y^2}}, \label{est-H}
\end{align}
where, since $x+th\leq 1\quad \forall t\in(0,1)$, 2nd to 3rd equality follows from 
\begin{align*}\frac{1}{\sqrt{(1-(x+th)^2)(1-y^2)}+1-(x+th)y}& \leq \text{min}\left\{\frac{1}{\sqrt{(1-(x+th)^2)(1-y^2)}}, \frac{1}{1-(x+th)y} \right\} 
\end{align*}
and 3rd to 4th equality follows from $$\frac{\sqrt{1-y^2}}{1-(x+th)y}\leq \frac{\sqrt{1-y^2}}{1-|y|}\leq C\frac{1}{\sqrt{1-y^2}}.$$ Therefore
$$|I_2|\leq C\sqrt h\int_I\frac{1}{\sqrt{1-|y|}}e^{\hat u_k(y)}dy\leq C\sqrt h.$$
This proves our claim. 
\end{proof}

\section{Proof of Theorem \ref{trm2}}
We set $$X:= C^0([-1,1]),\quad \|u\|_X:=\max_{[-1,1]} |u(x)|.$$ We define $T_\rho:X\to X$ given by $$T_\rho(u)(x):=\rho\int_I G(x,y)\frac{e^{u(y)}}{\int_I e^{u(\xi)} d\xi}dy.$$

\begin{lemma}
For every $\rho>0$ the operator $T_\rho$ is compact. 
\end{lemma}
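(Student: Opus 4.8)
The plan is to show that $T_\rho$ maps bounded sets of $X = C^0([-1,1])$ into relatively compact sets, which by the Arzel\`a--Ascoli theorem amounts to producing a uniform bound and a uniform modulus of continuity for the family $\{T_\rho(u) : \|u\|_X \le M\}$ for each fixed $M$. First I would observe that for $\|u\|_X \le M$ the density $f_u(y) := e^{u(y)}/\int_I e^{u(\xi)}\,d\xi$ satisfies $0 \le f_u \le e^{2M}/2$ and $\int_I f_u\,dy = 1$, so $T_\rho(u)(x) = \rho \int_I G(x,y) f_u(y)\,dy$ is a convolution-type integral of the fixed kernel $G$ against a probability density with uniformly bounded $L^\infty$ norm. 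Hence it suffices to understand the regularity properties of the Green kernel $G(x,y)$ from \eqref{Green} uniformly in the density.

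For the uniform bound, I would split $G(x,y) = -\tfrac1\pi \log|x-y| + H(x,y)$ and use that $H$ is bounded on $I \times I$ (indeed $|H(x,y)| \le C + |\log(1-|x|)|$, which is integrable and in fact bounded once one notes $G_x \ge 0$ and the representation of $u$ in Lemma \ref{green}), together with the fact that $\int_I |\log|x-y||\,dy \le C$ uniformly in $x \in I$. This gives $\|T_\rho(u)\|_X \le C\rho\, e^{2M}/2$, a bound independent of $u$. For equicontinuity, I would estimate $T_\rho(u)(x_1) - T_\rho(u)(x_2) = \rho\int_I (G(x_1,y) - G(x_2,y)) f_u(y)\,dy$. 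The term coming from $H$ is handled by the estimate \eqref{est-H} already derived in the proof of Lemma \ref{convGreen}, which shows $|H(x_1,y) - H(x_2,y)| \le C\sqrt{|x_1-x_2|}\,(1-y^2)^{-1/2}$; since $(1-y^2)^{-1/2}$ is integrable on $I$, this contributes $O(\sqrt{|x_1-x_2|})$ uniformly. The logarithmic term contributes $\tfrac\rho\pi \int_I |\log|x_1-y| - \log|x_2-y||\, f_u(y)\,dy$, which is bounded by $C\rho e^{2M} |x_1-x_2|\,|\log|x_1-x_2||$ using the standard $L^1$-modulus-of-continuity estimate for $\log$ (split into $|y - x_1| < 2|x_1-x_2|$ and the complement, as in the bound for $I_1$ in Lemma \ref{convGreen}).

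Combining the two displays yields a uniform modulus of continuity for the family $\{T_\rho(u)\}_{\|u\|_X \le M}$, so by Arzel\`a--Ascoli this family is relatively compact in $X$. Since $M$ was arbitrary, $T_\rho$ maps bounded sets to relatively compact sets. It remains to check continuity of $T_\rho$ as a map $X \to X$: if $u_j \to u$ in $X$, then $e^{u_j} \to e^u$ uniformly and $\int_I e^{u_j} \to \int_I e^u > 0$, so $f_{u_j} \to f_u$ uniformly, and then the uniform bound $\int_I |G(x,y)|\,dy \le C$ gives $T_\rho(u_j) \to T_\rho(u)$ uniformly. A continuous map sending bounded sets to relatively compact sets is compact, which proves the lemma.

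The main obstacle — though a mild one, since the needed estimate is already in hand — is controlling the $H$-contribution to the equicontinuity near the endpoints $x = \pm 1$, where $H(x,y)$ blows up logarithmically; the point is that the \emph{difference} $H(x_1,y) - H(x_2,y)$ is H\"older-$\tfrac12$ in $x$ with a constant that is integrable in $y$, exactly as quantified in \eqref{est-H}. Everything else is a routine application of the integrability properties of $\log|x-y|$ on a bounded interval.
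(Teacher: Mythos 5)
Your proof is correct and follows essentially the same route as the paper: split $G(x,y)=-\tfrac1\pi\log|x-y|+H(x,y)$, control the $H$-difference via \eqref{est-H} and the logarithmic difference by its standard $L^1$ modulus of continuity, and conclude by Arzel\`a--Ascoli. The only (harmless) additions are your explicit verification of the continuity of $T_\rho$, which the paper leaves implicit, and the slightly finer modulus $|x_1-x_2|\,|\log|x_1-x_2||$ for the logarithmic part where the paper simply absorbs everything into a uniform $C^{0,\frac12}$ bound.
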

\begin{proof}
Let $(u_k)$ be a sequence of functions in $X$ such that $\|u_k\|_X\leq M$. Then, up to a subsequence, $$\int_{I}e^{u_k}dx\to c_0,$$ for some $c_0>0$.  Moreover,  there exists $C=C(M,\rho)>0$ such that  for every $x_1,x_2\in I$ \begin{align*}|T_\rho(u_k)(x_1)-T_\rho(u_k)(x_2)|&\leq C\int_I\left|\log\frac{|x_1-y|}{|x_2-y|}\right|dy+C\int_I|H(x_1,y)-H(x_2,y)|dy\\ &\leq C|x_1-x_2|^\frac12, \end{align*} where we have used that  $$|H(x_1,y)-H(x_2,y)|\leq C\sqrt{|x_1-x_2|}(1-|y|)^\frac{-1}{2} ,$$ which follows from \eqref{est-H}. Thus, the sequence $(T_\rho(u_k))$ is  bounded in $C^\frac12(I)$, and hence, it is pre-compact in $X$.  
\end{proof}

\medskip 

\noindent\emph{Proof of Theorem \ref{trm2} (completed).} 
Non-existence of solutions to \eqref{eq0}-\eqref{dir} for $\rho\ge 2\pi$ follows at once from Proposition \ref{poho}.

We will use the Schauder fixed-point theorem to prove that  $T_\rho$ has a fixed point (say) $u_\rho$ for every $\rho\in(0,2\pi)$, which by Lemma \ref{green} will be a solution to \eqref{eq0}-\eqref{dir}. Fix $\rho\in (0,2\pi)$, and consider any sequence $(t_k,u_k)\in (0,1]\times X$ such that $u_k=t_kT_\rho(u_k)$. Then $u_k$ satisfies \eqref{eq0}-\eqref{dir} with $\rho$ replaced by $\rho t_k <2\pi$. Therefore, by Theorem \ref{trm1} there exists $C>0$ such that $\|u_k\|_X\leq C$. Hence, by Schauder's theorem, $T_\rho$ has a fixed point in $X$, which is a solution to \eqref{eq0}-\eqref{dir}. 

For $\rho\in (0,2\pi)$ let $u_\rho\in X$ be a  fixed point of $T_\rho$. Since $T_{2\pi}$ does not have a fixed point, thanks to Proposition \ref{poho}, and, since $u_\rho(0)=\max_I u(\rho)$ by Lemma \ref{MP}, we must have $$u_\rho(0)\to\infty\quad \text{as }\rho\uparrow 2\pi.$$
\hfill$\square$

\medskip 

\section{Appendix}\label{Hopf}

We present here a self-contained proof of the non-local moving-plane technique in the simple case of an interval.
It will be based on the following non-local Hopf-type lemma, which is now a rather classical result (see e.g. \cite[Theorem 1]{CL}, \cite[Lemma 1.2]{GS} or \cite[Lemma 2.7]{IMS}). We present a proof here, since we could not find a reference fitting our assumptions, and the same result can be used in other fractional problems on an interval, see e.g. \cite{MM2}. 

\begin{lemma} [Hopf-type lemma]\label{HL} Let $w\in L^\infty(\R)\cap C^0(\R)$  be a solution to \begin{align*}\left\{ \begin{array}{ll}(-\D)^\frac12w(x)=c(x)w(x)&\quad\text{on }(a,0) \\ w(x)=-w(-x)&\quad\text{on }\R\\ w\leq 0&\quad\text{on }(-\infty,0)\end{array}\right.\end{align*} for some  bounded function $c$, and $a\in [-\infty,0)$. Assume that $w$ is $C^3$ in a neighborhood of the origin. Then $w\equiv 0$ on $\R$ if and only if $w'(0)=0$.  \end{lemma}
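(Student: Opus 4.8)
The plan is to prove the non-trivial direction: assuming $w'(0)=0$, show $w\equiv 0$. The converse is immediate since $w\equiv 0$ obviously has vanishing derivative at the origin. First I would record the pointwise representation of the fractional Laplacian at the origin: since $w$ is $C^3$ near $0$ and bounded on $\R$, the principal value integral
\[
(-\Delta)^\frac12 w(0)=\frac1\pi\,\mathrm{P.V.}\int_\R\frac{w(0)-w(y)}{y^2}\,dy
\]
converges, and because $w(0)=0$ (forced by the oddness $w(x)=-w(-x)$), this reduces to $-\frac1\pi\,\mathrm{P.V.}\int_\R \frac{w(y)}{y^2}\,dy$. Using oddness again to fold the integral onto $(0,\infty)$, the two singular contributions near $\pm 0$ cancel, and I would combine $y$ and $-y$ to get
\[
(-\Delta)^\frac12 w(0)=-\frac1\pi\int_0^\infty\Big(\frac{w(y)}{y^2}+\frac{w(-y)}{y^2}\Big)dy=-\frac1\pi\int_0^\infty\frac{w(y)-w(y)}{y^2}\,dy,
\]
wait—more carefully, $w(-y)=-w(y)$ gives $\frac{w(y)}{y^2}+\frac{w(-y)}{y^2}=0$ pointwise, so one must instead be careful: the correct reduction is that $(-\Delta)^\frac12 w(0)=-\frac{2}{\pi}\int_0^\infty \frac{w(y)}{y^2}\,dy$ is NOT what appears; rather, since $w\le 0$ on $(-\infty,0)$ and hence $w\ge 0$ on $(0,\infty)$ by oddness, and $w$ is not identically zero, I would show this integral is strictly negative — i.e. $(-\Delta)^\frac12 w(0)<0$ strictly, unless $w\equiv 0$ on $(0,\infty)$ (equivalently on all of $\R$).

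The second ingredient is the differential equation: at the origin $(-\Delta)^\frac12 w(0)=c(0)w(0)=0$ since $w(0)=0$. Comparing with the previous step forces $\int_0^\infty \frac{w(y)}{y^2}\,dy=0$ with a nonnegative integrand, hence $w\equiv 0$ on $(0,\infty)$, hence on $\R$ by oddness. At this point the hypothesis $w'(0)=0$ has not yet been used, which signals that the argument above is too naive: the subtlety is that $w$ need not be nonnegative on all of $(0,\infty)$ — it is only known to be $\le 0$ on $(-\infty,0)$, which by oddness gives $w\ge 0$ on $(0,\infty)$, so actually it IS nonnegative there. So the role of $w'(0)=0$ must be to handle the \emph{convergence} and \emph{sign} of the principal-value integral at the singular point: with only $w\in C^0$ and $w(0)=0$ one cannot split the P.V. integral, but the assumption that $w$ is $C^3$ near $0$ together with $w'(0)=0$ means $w(y)=O(y^2)$ near $0$, so $\frac{w(y)}{y^2}$ is bounded near $0$ and the integral $\int_0^1\frac{w(y)}{y^2}\,dy$ is a genuine (non-principal-value) convergent integral of a nonnegative function. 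Thus I would carry out the steps in this order: (1) oddness $\Rightarrow w(0)=0$ and $w\ge0$ on $(0,\infty)$; (2) $C^3$ regularity and $w'(0)=0$ $\Rightarrow w(y)=O(y^2)$, so the singular integral at $0$ is absolutely convergent; (3) evaluate $(-\Delta)^\frac12 w(0)$ via the pointwise formula, folding by oddness, obtaining $(-\Delta)^\frac12 w(0)=-\frac{2}{\pi}\int_0^\infty \frac{w(y)}{y^2}\,dy\le 0$, with equality iff $w\equiv 0$ on $(0,\infty)$; (4) use the PDE at $0$ to get $(-\Delta)^\frac12 w(0)=0$; (5) conclude.

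I expect the main obstacle to be step (2)–(3): making rigorous that the decay $w(y)=O(y^2)$ near the origin, guaranteed by the $C^3$ hypothesis and $w'(0)=0$ (and in fact $w''(0)=0$ too by oddness of $w$, which kills the quadratic term, leaving $w(y)=O(y^3)$), legitimately converts the principal-value expression into an ordinary convergent integral, and that the various tail pieces (on $(-\infty,a)$ or wherever $w$ is merely bounded) are finite because $\int \frac{|w(y)|}{1+y^2}\,dy<\infty$ for bounded $w$. One should be slightly careful that the PDE $(-\Delta)^\frac12 w = cw$ is only assumed on $(a,0)$, not at $0$ itself; but since $w$ is continuous and $C^3$ near $0$ and $c$ is bounded, one passes to the limit $x\to 0^-$ in $(-\Delta)^\frac12 w(x)=c(x)w(x)$, using continuity of $x\mapsto (-\Delta)^\frac12 w(x)$ near the origin (which again uses the $C^3$ regularity), to obtain the identity at $0$. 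Once these continuity and convergence points are nailed down, the sign analysis is elementary.
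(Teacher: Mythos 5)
There is a fatal flaw at the heart of your plan, in step (3). For an odd function $w$ with $w(0)=0$, the pointwise value of the half-Laplacian at the origin carries no information at all: folding the principal value integral by the substitution $y\mapsto -y$ gives
\begin{equation*}
(-\Delta)^\frac12 w(0)=-\frac1\pi\,P.V.\int_{\R}\frac{w(y)}{y^2}\,dy
=-\frac1\pi\,P.V.\int_0^\infty\frac{w(y)+w(-y)}{y^2}\,dy=0,
\end{equation*}
since $w(y)+w(-y)\equiv 0$. You actually noticed this cancellation in the middle of your write-up, but then reverted to the incorrect identity $(-\Delta)^\frac12 w(0)=-\frac{2}{\pi}\int_0^\infty \frac{w(y)}{y^2}\,dy$ in the enumerated plan; that identity is false, and the true value is $0$ whether or not $w\equiv 0$, so no contradiction can be extracted at $x=0$. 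A further symptom that the strategy cannot work is that in your plan the hypothesis $w'(0)=0$ is used only to guarantee convergence of an integral: but the content of the lemma is precisely that a nontrivial $w$ must have $w'(0)\neq 0$, i.e.\ first-order information at the boundary point, which is invisible to the zeroth-order identity $(-\Delta)^\frac12 w(0)=0$. (There is also a secondary issue — the equation is only assumed on $(a,0)$, not at $0$ — but even granting your limiting argument, the identity obtained is vacuous.)

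The paper's proof works instead at interior points $x<0$ close to the origin, where the equation does hold and where the odd reflection does not cancel but produces the positive kernel $K(x,y)=\frac{1}{(x-y)^2}-\frac{1}{(x+y)^2}>0$ on $(-\infty,0)\times(-\infty,0)$. One first shows $w<0$ on $(a,0)$ (if $w$ vanished at an interior point, evaluating the equation there with this kernel gives a sign contradiction), hence $w\le -M$ on some fixed interval $(a_1,a_2)\subset(a,0)$. Then, using the Taylor information $w(x)=O(x^3)$, $w'(x)=O(x^2)$, $w''(x)=O(x)$ (which follows from oddness together with $w'(0)=0$ and the $C^3$ regularity), one splits $(-\infty,0)$ into several regions and estimates: the contribution of $(a_1,a_2)$ is bounded below by $c_1|x|$ because $K(x,y)\approx|x|$ there and $w(x)-w(y)\geq \frac M2$, while all remaining contributions are $O(\ve)|x|+O(x^2)$ and the right-hand side $c(x)w(x)=O(x^3)$. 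Letting $x\to 0^-$ the first-order term $c_1|x|$ dominates and yields the contradiction. This is the step your approach misses: the Hopf phenomenon is detected by the asymptotics of $(-\Delta)^\frac12 w(x)$ as $x\to 0^-$, not by its value at $x=0$.
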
 
\begin{proof} We assume by contradiction that $w\not\equiv 0$ on $\R$ and $w'(0)=0$. Then, as $w$ is an odd  function, we have $w(0)=w'(0)=w''(0)=0$. Hence, by Taylor expansion,  for some  $\delta>0$
\begin{align}\label{22}\begin{array}{ll}w(y)-w(x)=(y-x)w'(x)+\frac{(y-x)^2}{2}w''(x)+O((x-y)^3)\\w(x)=O(x^3),\quad w'(x)=O(x^2),\quad w''(x)=O(x), \end{array}  \end{align}
for every $x,y\in(-\delta,\delta) $.
For  $x<0$ near the origin we write
\begin{align*}
(-\D)^\frac12w(x)&=\frac1\pi  P.V.\int_{\R} \frac{w(x)-w(y)}{(x-y)^2}dy\\&=\frac1\pi \left(P.V.\int_{y<0}K(x,y)(w(x)-w(y))dy+2\int_{y<0}\frac{w(x)}{(x+y)^2}dy\right)\\ &=:\frac1\pi \left[P.V.(I)+(II)\right],\end{align*} where $$K(x,y):=\left(\frac{1}{(x-y)^2}-\frac{1}{(x+y)^2} \right)> 0\quad\text{on }(-\infty,0)\times(-\infty,0).$$
This implies that $w<0$ on $(a,0)$, since if $w(x)=0$ for some $x\in (a,0)$, we would have
$$0=(-\Delta)^\frac{1}{2}w(x)= -P.V.\int_{y<0}K(x,y)w(y)dy<0,$$
contradiction. Consequently, 
$w\leq -M$ on $(a_1,a_2)$ for some $M>0$ and $a<a_1<a_2<0$. For $x<0$ very close to the origin and for $|a_2|>>\ve>>|x|$  we split $(-\infty,0)$ into $(-\infty,0)=\cup_{i=1}^5A_i$ where  $$ A_1:=(2x,0),\quad A_2=(-\ve,2x), \quad A_3:=(a_1,a_2), \quad A_4:= (a_2,-\ve),\quad A_5:=(-\infty,a_1).$$ We now write $$(I)=\sum_{i=1}^5I_i,\quad I_i:=\int_{A_i}K(x,y)(w(x)-w(y))dy.$$ Using \eqref{22} we obtain $$\int_{A_1}\frac{w(x)-w(y)}{(x+y)^2}dy=O(x^2).$$ Therefore, as $I_1$ is in the PV sense, again by \eqref{22} $$I_1=O(x^2)+PV \int_{2x}^0\frac{w'(x)+\frac12(x-y) w''(x)+O((x-y)^2)}{x-y}dy=O(x^2).$$
From $$K(x,y)|x-y|^3\leq 4|x|\quad\text{and }K(x,y)\leq \frac{1}{(x-y)^2}\quad\text{for }y\in A_2,$$ and \eqref{22} one gets $$I_2=O(\ve)|x|.$$
Since $K(x,y)\approx |x|$ and $w(x)-w(y)\geq \frac M2$ for $y\in A_3$,  we obtain $$I_3\geq c_1 |x|\quad\text{for some }c_1>0.$$ Now we fix $\ve>0$ small enough so that $$|I_1|+|I_2|\leq \frac 14c_1|x|.$$ Then, for  $\ve>>-x>0$ we have $w(x)-w(y)>0$ for $y\in A_4$, which leads to $I_4>0.$ Recalling that $w\leq 0$ on $(-\infty,0)$, we have $w(x)-w(y)\geq w(x)=O(x^3)$ for $y\in A_5$, which  gives $I_5\geq O(x^4)$.  Thus $$(I)\geq \frac34 c_1|x|+O(x^4). $$ Note that $$(II)=O(x^2).$$ Combining these estimates we obtain $$0=(-\D)^\frac12w(x)+c(x)w(x)\geq \frac34 c_1|x|+O(x^2)+c(x)O(x^3)=\frac34 c_1 |x|+O(x^2)>0,$$ for $x<0$ sufficiently small, a contradiction.  
 \end{proof} 

\begin{trm}\label{MP_Ap} Let $u\in C^\frac{1}{2}(\R)\cap C^\infty(I)$ be a solution to
\[
\left\{
\begin{array}{ll}
(-\Delta)^\frac12 u=f(u)&\text{in }I\\
u=0& \text{in }\R\setminus I\\
u>0&\text{in }I,
\end{array}
\right.
\]
where $f$ is Lipschitz continuous, non-negative and non-decreasing. Then $u$ is even and $u(x)\ge u(y)$ for $0\le x\le y$.
\end{trm}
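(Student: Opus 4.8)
The plan is to run the nonlocal method of moving planes. For $\lambda\in(-1,0]$ write $x_\lambda:=2\lambda-x$ for the reflection about the hyperplane $\{x=\lambda\}$, set $\Sigma_\lambda:=(-1,\lambda)$, and define $w_\lambda(x):=u(x_\lambda)-u(x)$. The goal is to show $w_\lambda\ge 0$ on $\Sigma_\lambda$ for every $\lambda\le 0$; applying this at $\lambda=0$ and then repeating the argument from the right endpoint gives $u(x)=u(-x)$, and monotonicity on $(0,1)$ follows from the one-sided inequalities $w_\lambda\ge 0$ for $\lambda<0$. First I would record the key structural fact: since $u$ solves a translation-invariant equation and $u_\lambda$ is its reflection, and since $(-\Delta)^{1/2}$ commutes with reflections, $w_\lambda$ satisfies a linear nonlocal equation $(-\Delta)^{1/2}w_\lambda(x)=f(u(x_\lambda))-f(u(x))=c_\lambda(x)w_\lambda(x)$ on $\Sigma_\lambda$, where $c_\lambda(x):=\frac{f(u(x_\lambda))-f(u(x))}{u(x_\lambda)-u(x)}$ is bounded by the Lipschitz constant of $f$ (and $\ge 0$ since $f$ is non-decreasing), together with the antisymmetry $w_\lambda(x_\lambda)=-w_\lambda(x)$ and the sign condition that $w_\lambda\ge 0$ on $\R\setminus I$ coming from $u\ge 0$, $u\equiv 0$ outside $I$ and the fact that the reflected copy of $\R\setminus\Sigma_\lambda$ contains pieces where $u$ vanishes.

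Second, I would establish the starting point of the procedure: for $\lambda$ sufficiently close to $-1$, $w_\lambda\ge 0$ on $\Sigma_\lambda$. This is the analogue of the classical ``narrow domain'' step. The cleanest nonlocal way is a maximum-principle argument on a thin interval: if $w_\lambda$ were negative somewhere in $\Sigma_\lambda$ it would attain a negative minimum at some interior point $\bar x$ (using that $w_\lambda\to 0$ at $-1$ by continuity and that $w_\lambda\ge0$ on the reflected exterior region), and evaluating $(-\Delta)^{1/2}w_\lambda(\bar x)$ via the singular-integral representation, splitting the integral through the reflection $y\mapsto y_\lambda$ exactly as in the proof of Lemma~\ref{HL} with the positive kernel $K(x,y)=\frac{1}{(x-y)^2}-\frac{1}{(x-y_\lambda)^2}$, one gets $(-\Delta)^{1/2}w_\lambda(\bar x)<0$ strictly, while $c_\lambda(\bar x)w_\lambda(\bar x)\le 0$; for $|\Sigma_\lambda|$ small the gain from the kernel beats $\|c_\lambda\|_\infty|w_\lambda(\bar x)|$, a contradiction.

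Third, I would push $\lambda$ to the right. Let $\lambda_0:=\sup\{\lambda\le 0:\ w_\mu\ge0\text{ on }\Sigma_\mu\text{ for all }\mu\le\lambda\}$. By continuity $w_{\lambda_0}\ge 0$ on $\Sigma_{\lambda_0}$; by the strong maximum principle for the linear antisymmetric nonlocal problem (again via the positive-kernel splitting: if $w_{\lambda_0}(x_0)=0$ at an interior point then $0=(-\Delta)^{1/2}w_{\lambda_0}(x_0)=-\frac1\pi\int K(x_0,y)w_{\lambda_0}(y)\,dy\le 0$ with equality forcing $w_{\lambda_0}\equiv0$), either $w_{\lambda_0}\equiv 0$ — which would already give the reflection symmetry about $\lambda_0$, hence about $0$ after also running from the other side — or $w_{\lambda_0}>0$ in the interior of $\Sigma_{\lambda_0}$. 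In the latter case, assuming $\lambda_0<0$, one shows $\lambda_0$ was not maximal: on a compact subset of $\Sigma_{\lambda_0}$, $w_{\lambda_0}$ is bounded below by a positive constant, and near the two ends of $\Sigma_\lambda$ one uses the narrow-domain maximum principle again; combining these, $w_\lambda\ge 0$ on $\Sigma_\lambda$ persists for $\lambda$ slightly larger than $\lambda_0$, contradicting the definition of $\lambda_0$. Hence $\lambda_0=0$, giving $u(-x)\ge u(x)$ for $x\in(0,1)$; the symmetric argument from the endpoint $+1$ gives the reverse inequality, so $u$ is even, and the inequalities $w_\lambda\ge0$ for $\lambda<0$ translate into $u$ being non-increasing on $(0,1)$.

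The main obstacle is handling the boundary behavior at the two moving endpoints of $\Sigma_\lambda$ cleanly in the nonlocal setting — in particular justifying that $w_\lambda$ cannot develop a negative dip near $x=-1$ or near $x=\lambda$ as $\lambda$ increases past $\lambda_0$. The Dirichlet exterior condition helps at $-1$ (there $u$ and its reflected copy are controlled and $w_\lambda\to0$), but near the right end of $\Sigma_\lambda$ the reflection maps points close to $\lambda$ to points close to $\lambda$, so the kernel $K(x,y)$ degenerates; the resolution is to treat a fixed thin collar $(\lambda-\delta,\lambda)$ by the narrow-interval maximum principle (where the $\|c_\lambda\|_\infty$ term is beaten) and the complementary compact part by the strict positivity of $w_{\lambda_0}$ plus continuity in $\lambda$. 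This is exactly where the Hopf-type Lemma~\ref{HL} is not quite enough by itself and one needs the quantitative narrow-domain estimate built from the same positive-kernel decomposition.
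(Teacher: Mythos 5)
Your proposal is correct in outline, and it implements the moving-plane method by a genuinely different route than the paper. The paper starts the plane near the endpoint by an explicit estimate of $u'$ from the Green representation (showing $u'<0$ on $(1-\ve,1)$), and at the critical position $\lambda^*$ it takes a sequence of maximum points, uses the positive-kernel computation to force the touching point onto the plane, and then invokes the Hopf-type Lemma \ref{HL} (which needs $C^3$ regularity near the touching point) to reach a contradiction with $u'(\lambda^*)=0$. You instead run the ``direct'' nonlocal scheme: the same positive-kernel splitting, but packaged as a quantitative narrow-region maximum principle — at a negative minimum $\bar x$ the antisymmetric part contributes $\tfrac{2w_\lambda(\bar x)}{\pi(\lambda-\bar x)}$, which beats $\Lip(f)\,|w_\lambda(\bar x)|$ when $\bar x$ lies within a fixed small distance of the plane — used both to start near $-1$ and to advance past $\lambda_0$ in a thin collar at the plane, together with the strong maximum principle (interior touching forces $w_{\lambda_0}\equiv 0$) and continuity on the compact remainder. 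What each buys: your route dispenses with Lemma \ref{HL} and with the boundary-derivative computation (so no extra smoothness at the plane and no need for $f(u)$ to dominate the $O(1)$ term near the endpoints), at the price of proving the narrow-region principle, which is however the same two-line kernel estimate. Three small repairs to your sketch: (i) $w_\lambda$ does not tend to $0$ at $-1$; all you need (and have) is $w_\lambda(-1)=u(2\lambda+1)\ge 0$, which already places any negative minimum in the interior; (ii) the alternative $w_{\lambda_0}\equiv 0$ with $\lambda_0<0$ does not ``give symmetry about $0$'' — it is simply impossible, since symmetry about $\lambda_0<0$ would reflect points of $I$ to the exterior where $u=0$, contradicting $u>0$ in $I$, so only the strict-positivity branch survives; (iii) the narrow-region principle only applies to regions hugging the plane $\{x=\lambda\}$, so the collar at $-1$ must be handled, as your last paragraph in effect does, by continuity: there $w_\lambda>0$ because $u$ vanishes continuously at $-1$ while its reflection lies in a compact subset of $I$ where $u$ is bounded below.
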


\begin{proof}
First, we claim that $u$ is monotone decreasing on $(1-\ve,1)$ for some $\ve>0$.  Although this follows from Lemma 1.2 in \cite{GS}, we shall give a simple self-contained proof.
We write
$$u(x)=\frac{1}{\pi}v(x)+ w(x),$$
where
$$v(x):=  \int_I\log\bra{\frac{1}{|x-y|}}f(u(y))dy,\quad  w(x):=\int_IH(x,y)f(u(y))dy,$$
where $H(x,y)$ is as in \eqref{Green}.
Differentiating under the integral sign one obtains $w'\leq C$ on $(0,1)$. For $h$ small we have
\begin{align*}v(x+h)-v(x)&=f(u(x))\int_I\log\bra{\frac{|x-y|}{|x+h-y|}}dy\\
&\quad +\int_I\log\bra{\frac{|x-y|}{|x+h-y|}}\left(f(u(y))-f(u(x))\right)dy\\
&=:v_1(x,h)+v_2(x,h).\end{align*}
Using that $u\in C^\frac12(\R)$ one gets
$$\lim_{h\to0}\frac{v_2(x,h)}{h}=O(1)\quad\text{on }I.$$ Computing the integral explicitly we obtain
$$\lim_{h\to0}\frac{v_1(x,h)}{h}=f(u(x))\left( \log(1-x)-\log(1+x)\right)\quad\text{on }I.$$
Thus, for $\ve>0$ sufficiently small
$$u'(x)\leq C +\frac{1}{\pi}f(u(x))(\log(1-x)-\log(1+x))<0\quad\text{on }(1-\ve,1),$$ proving the claimed monotonicity.    In particular, as $u=0$ on $I^c$ and $u>0$ on $I$, for $\lambda>1-\frac\ve2$ we have $$u_\lambda(x):=u(x_\lambda)-u(x)\leq0\quad\text{on }\Sigma_\lambda:=(-\infty,\lambda),\quad x_\lambda:=2\lambda-x.$$ We set  $$\lambda^*:=\inf\{\bar\lambda>0:u_\lambda\leq 0\text{ on }\Sigma_\lambda  \text{ for every }\lambda\geq\bar\lambda\}.$$ We claim now that $\lambda^*=0$. Otherwise there would be a sequence $\lambda_n\uparrow \lambda^*>0$ and $x_n\in \Sigma_{\lambda_n}$ such that 
$$\max_{\Sigma_{\lambda_n}}u_{\lambda_n}= u_{\lambda_n}(x_n)>0.$$
Moreover, since $u(x)= 0$ for $x\ge 1$ and $u>0$ in $I$, we must have $x_n\in (-1+2\lambda_n,\lambda_n)$.  Then, up to a subsequence, $x_n\to x_0\in [-1+2\lambda^*,\lambda^*]$ and $u_{\lambda^*}(x_0)=0$.
Now, on the one hand, using the equation we have
$$(-\Delta)^\frac12 u_{\lambda^*}(x) = f(u(x_\lambda))- f(u(x)) \le 0\quad \text{for }x\in (-1+2\lambda^*,\lambda^*).$$
On the other hand, with the singular kernel definition for the fractional Laplacian, since  $u_\lambda^*\leq 0\text{ on }\Sigma_{\lambda^*}$, $u_{\lambda^*}(x_0)=0$ and $u_{\lambda^*}(x)=-u_{\lambda^*}(x_{\lambda^*})$, we can compute its value at $x_0\in [-1+2\lambda^*,\lambda^*]$:
\begin{equation*}
\begin{split}
(-\Delta)^\frac12 u_{\lambda^*}(x_0)&=\frac{1}{\pi}P.V. \int_{\R} \frac{u_{\lambda^*}(x_0)-u_{\lambda^*}(y)}{(x_0-y)^2}dy\\
&=\frac{1}{\pi}P.V. \int_{\Sigma_{\lambda^*}} u_{\lambda^*} (y)\left(\frac{1}{(x_0-2\lambda^*-y)^2}-\frac{1}{(x_0-y)^2}\right)dy\\
&\geq 0.
\end{split}
\end{equation*}
 Then, 
  we conclude that $x_0=\lambda^*$. Hence, $$0=u'_{\lambda^*}(x_0)=-2u'(\lambda^*).$$
Moreover,  $u_{\lambda^*}<0$ in $(-1+2\lambda^*,\lambda^*)$, and this contradicts Lemma \ref{HL}. 
Thus $\lambda^*=0$ and $u_{0}\leq 0$. In a similar way one can show that $u_{0}\geq 0$. 
\end{proof}

\end{document}